\numberwithin{equation}{section}
\theoremstyle{definition}
\newtheorem{theorem}{\bf Theorem}[section]
\newtheorem{lemma}[theorem]{\bf Lemma}
\newtheorem{corollary}[theorem]{\bf Corollary}
\newtheorem{example}[theorem]{\bf Example}
\newtheorem{definition}[theorem]{\bf Definition}
\newtheorem{question}[theorem]{\bf Question}
\newtheorem{proposition}[theorem]{\bf Proposition}
{
\newtheorem*{Ack}{\bf Acknowledgements}
}
{

}
\newcommand{\mm}[1]{\mathrm{#1}}
\newcommand{\mb}[1]{\mathbb{#1}}
\begin{document}

\title[On spherical fibrations and Poincare complexes]
{On spherical fibrations and Poincare complexes}

\author[Wen Shen]{Wen Shen}

\address{Department of Mathematics, Capital Normal University,
Beijing, P.R.China  }
\email{shenwen121212@163.com}

\begin{abstract}
In this paper, we prove that certain spherical fibrations over certain CW-complexes are stably fibre homotopy equivalent to $\mm{TOP}$-spherical fibrations (see Definition \ref{defiTOPfib}). Applying this result, we get a sufficient condition for whether a Poincar$\mm{\acute{e}}$ complex is of the homotopy type of a topological manifold. Moreover, we present the classification for some highly connected manifolds by the homotopy types of highly connected Poincar$\mm{\acute{e}}$ complexes.	
\end{abstract}

\subjclass[2020]{Primary 55R05, 57N16, 57P10}
\maketitle

\section{Introduction}

In the algebraic topology, the fibration is one of the fundamental objects. A fibration is a map $\xi = \{p : E \to  B\}$ which satisfies the following covering homotopy property: For every map $F : X \times I \to  B$ and every $g : X \to  E$ with $pg(x) = F (x, 0)$, there exists $G : X \times I \to  E$ with $G(x, 0) = g(x)$ and $pG = F$. 
From the classic homotopy theory, the fibers $F = p^{-1}(b)$ of a fibration $p : E\to B$ over each path
component of $B$ are all homotopy equivalent \cite{Hatcher}.   
If all fibers of a fibration are homotopy equivalent to $F$, we say it is an $F$-fibration. Two $F$-fibrations $\xi = \{p : E \to  B\}$, $\xi^\prime = \{p^\prime : E^\prime \to  B\}$ over the same space $B$ are called fibre homotopy equivalent if there are maps $f,g$ and homotopies $g\circ f\stackrel{H}{\simeq} id_E$, $f\circ g\stackrel{H^\prime}{\simeq} id_{E^\prime}$ so that $p^\prime \circ f=p$, $p \circ g=p^\prime$, and
\[
\xymatrix{
  E\ar[r]^-{{f}} \ar[d]^-{p}& E^\prime\ar[d]^-{{p}^\prime} & E^\prime\ar[r]^-{{g}} \ar[d]^-{p^\prime}& E\ar[d]^-{{p}}\\
{B}\ar@{=}[r]^-{} &B & B\ar@{=}[r]& B
}
\]
 $p\circ H$, $p^\prime\circ H^\prime$ are stationary homotopies (in other words, $H$ and $H^\prime$ only move points around in their fibers).

Another important object is the (locally trivial) fibre bundle $\xi=\{p:E\to B\}$ \cite{Hu1994}. 
For any $b\in B$, the spaces $p^{-1}(b)$ are homeomorphic to $F$ which is the fibre bundle's fiber. Moreover, every fibre bundle over a paracompact space (e.g., over a CW-complex) is a fibration. 

A principal $G$-bundle \cite{Steen1951} is a special fibre bundle where $G$ is a topological group.  
In \cite{Mil19561,Mil19562}, Milnor gave a construction of a universal principle bundle $\mathfrak{U}=({EG},p,{BG},G)$ for any topological group $G$ where $BG$ can be chosen to be a CW-complex. We say that ${BG}$ is the classifying space of the principal $G$-bundles. For any principal $G$-bundle over a compact space $B$, it admits a classifying map $f:B\to BG$ so that the pull-back bundle $f^\ast \mathfrak{U}$ is isomorphic to itself. For the morphisms between principle (fibre) bundles, one can refer to \cite{Hu1994}.

Every fibre bundle has a structure group $G$, thus associated with a principal $G$-bundle \cite{Hu1994}. Let $\xi$ be a fibre bundle with fiber $\mb{R}^n$. If its structure group is the group $\mm{O}_n$ of all orthogonal transformations of $\mb{R}^n$, then $\xi$ is the well-known vector bundle, also called an $\mm{O}_n$-bundle.
In particular, we can take the structure group of $\xi$ as the topological group $\mm{TOP}_n$ of homeomorphisms $(\mb{R}^n,0) \to  (\mb{R}^n,0)$ with the compact-open topology. Currently, we say that such bundle $\xi$ is a $\mm{TOP}_n$-bundle. Similarly to  vector bundle, every $\mm{TOP}_n$-bundle has a $0$-section.

For $F$-fibrations, there also exists a universal $F$-fibration $\gamma^F$ \cite{Rudyak}. The base space $B_F$ of a universal $F$-fibration is called a classifying space for $F$-fibrations, which can be chosen to be a CW-complex, and in this case for $F$-fibrations is uniquely defined up to homotopy equivalence.

 Every $n$-dimensional vector bundle is a $\mm{TOP}_n$-bundle by definition, every $\mm{T OP}_n$-bundle can be turned into an $S^{n-1}$-fibration by deleting the $0$-section. This hierarchy induces the sequence of forgetful maps
$\mm{BO}_n\to \mm{BTOP}_n\to \mm{B \mathcal G}_n$
where $\mm{B \mathcal G}_n$ is the classifying space for $S^{n-1}$-fibrations.
Indeed, there is another structure, i.e., piecewise linear (PL) $\mb{R}^n$-bundle \cite{KirbSieb1977} between $\mm{O}_n$-bundle and $\mm{TOP}_n$-bundle (see \cite{RoSa} about PL notions).

 Following lines of thought in the work of A. Grothendieck, Atiyah and Hirzebruch developed K-theory, which is a generalized cohomology theory defined by using stability classes of vector bundles. For $\mm{TOP}_n$-bundles, one can also define the product bundle and the Whitney sum, thus the same stability classes as vector bundles. 
 
 For $S^{n-1}$-fibration $\xi$ and $S^{m-1}$-fibration $\eta$ over the same base space $X$, we define the join $\xi *\eta$ \cite{Rudyak} which is an $S^{m+n-1}$-fibration over $X\times X$. Then we define the Whitney sum $\xi\oplus \eta= d^\ast(\xi * \eta)$ for spherical fibrations where $d : X \to  X \times X$ is the diagonal, and the stability classes of spherical fibrations. $\xi$ is stably isomorphic to $\eta$ if $\xi\oplus \theta^{N+m}$ is fibre homotopy equivalent to $\eta\oplus \theta^{N+n}$ for some $N$ where $\theta^{N}$ denotes the trivial $S^{N-1}$-fibration over $X$.

Let $B\mathcal V$ be the telescope of the sequence 
$$\cdots \to  B\mathcal{V}_{n-1} \to B\mathcal V_n\to  B\mathcal V_{n+1} \to \cdots $$
where $\mathcal V=\mm{O}$, $\mm{TOP}$, $\mathcal G$. For every finite CW-complex $X$, the set of all stable $\mathcal V$-objects over $X$ is in bijective correspondence with the homotopy class set $[X,B\mathcal V]$ \cite{Rudyak}, which is an abelian group. There also exists the sequence of forgetful maps
$$\mm{BO}\to \mm{BTOP}{\to } \mm{B \mathcal G}.$$
\begin{definition}\label{defiTOPfib}
	Let $\mathcal V=\mm{O}$ or $\mm{TOP}$. An $S^{n-1}$-fibration is called a $\mathcal V_n$-spherical fibration if it is realized by deleting the $0$-section of a $\mathcal V_n$-bundle.
	A spherical fibration is stably fibre homotopy equivalent to a $\mathcal V$-spherical fibration if it is stably isomorphic to a $\mathcal V_n$-spherical fibration for some $n$.
\end{definition}

From the definitions for $\mm{O}_n$-bundle, $\mm{TOP}_n$-bundle and $S^{n-1}$-fibration, there are natural questions as follows.
\begin{question}
	 Is an $S^{n-1}$-fibration fibre homotopy equivalent to a $\mathcal V_n$-spherical fibration?
\end{question}
\begin{question}
	 Is a spherical fibration stably fibre homotopy equivalent to a $\mathcal V$-spherical fibration?
\end{question}

In this paper, we consider whether a spherical fibration over a certain CW-complex is stably fibre homotopy equivalent to a TOP-spherical fibration. Any considered space is with a base point, and any map preserves the base points.

Let $X$ be a 1-connected CW-complex.  A stable spherical fibration $\xi$ over $X$ has a classifying map $X\to \mm{B\mathcal G}$ that admits a lift to the 2-fold cover $\mm{BS\mathcal G}$ of $\mm{B\mathcal G}$. Let the lift $f$ denote the classifying map.  
Note that $\xi$ is stably fibre homotopy equivalent to a $\mm{TOP}$-spherical bundle if the classifying map $f:X\to \mm{BS\mathcal G}$ has a homotopy lift to $\mm{BSTOP}$ where $\mm{BSTOP}$ is the 2-fold cover of $\mm{BTOP}$. 

Let $\mm{\mathcal {G}/TOP}$ be the homotopy fiber of $\mm{BSTOP}\to \mm{BS\mathcal G}$. By the works of Sullivan \cite{Sull1967} and Kirby-Siebenmann \cite{KirbSieb1977}, $\pi_n(\mm{\mathcal {G}/TOP})=0$ for $n=2i+1$, $\mb{Z}_2$ for $n=4i+2$, $\mb{Z}$ for $n=4i$.
Hence, by basic obstruction theory \cite{Hatcher} and the homotopy groups $\pi_\ast(\mm{\mathcal G/TOP})$, we have 
 \begin{proposition}\label{basicth}
Let $X$ be a finite $1$-connected CW-complex satisfying that $H^{4n+1}(X;\mb{Z})=0$ and $H^{4n+3}(X;\mb{Z}/2)=0$ for $n\ge 0$ where $\mb{Z}/2$ is integers mod $2$.
Then every spherical fibration over $X$ is stably fibre homotopy equivalent to a TOP-spherical fibration.	
\end{proposition}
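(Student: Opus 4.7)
The plan is to apply standard obstruction theory to the principal fibration
$$\mm{\mathcal G/TOP} \longrightarrow \mm{BSTOP} \longrightarrow \mm{BS\mathcal G}.$$
As observed just before the statement, any stable spherical fibration $\xi$ over the $1$-connected CW-complex $X$ has a classifying map that lifts to $f : X \to \mm{BS\mathcal G}$, and $\xi$ will be stably fibre homotopy equivalent to a $\mm{TOP}$-spherical fibration as soon as $f$ admits a further lift to $\mm{BSTOP}$. So my goal is to build such a lift skeleton by skeleton, starting at the basepoint.

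Since $X$ is $1$-connected and, by the recalled computation, $\pi_1(\mm{\mathcal G/TOP})=0$, I may use obstruction theory for lifts through a fibration with untwisted coefficients. Suppose inductively that a lift has been constructed on the $n$-skeleton $X^n$. The obstruction to extending it over $X^{n+1}$ is a class
$$\mathfrak o_{n+1}\in H^{n+1}\bigl(X;\pi_n(\mm{\mathcal G/TOP})\bigr),$$
and the induction succeeds exactly when every such class vanishes.

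I would then read off these obstructions case by case from the three clauses of the homotopy-group formula. If $n$ is odd, then $\pi_n(\mm{\mathcal G/TOP})=0$, so $\mathfrak o_{n+1}=0$ automatically. If $n=4i$, then the coefficient group is $\mb{Z}$ and the obstruction lies in $H^{4i+1}(X;\mb{Z})$, which is zero by hypothesis. If $n=4i+2$, then the coefficient group is $\mb{Z}/2$ and the obstruction lies in $H^{4i+3}(X;\mb{Z}/2)$, again zero by hypothesis. As $X$ is a finite CW-complex, only finitely many of these obstructions need to be considered, so the induction terminates with a lift $\tilde f:X\to\mm{BSTOP}$ as required.

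There is no real obstacle here: the cohomological hypotheses on $X$ are tuned precisely to kill the obstructions that the homotopy groups of $\mm{\mathcal G/TOP}$ force, so the argument is entirely formal once the obstruction-theoretic setup is in place. The only item worth verifying explicitly is the simple connectivity of $\mm{\mathcal G/TOP}$, which justifies using untwisted coefficients in the obstruction classes; this follows immediately from the vanishing of $\pi_1$ in the quoted list.
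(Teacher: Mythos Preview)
Your proposal is correct and is precisely the argument the paper has in mind: the paper states the proposition immediately after the sentence ``by basic obstruction theory \cite{Hatcher} and the homotopy groups $\pi_\ast(\mathcal G/\mathrm{TOP})$,'' and gives no further proof. Your skeleton-by-skeleton lifting argument through the fibration $\mathcal G/\mathrm{TOP}\to\mathrm{BSTOP}\to\mathrm{BS}\mathcal G$, with the obstructions in $H^{n+1}(X;\pi_n(\mathcal G/\mathrm{TOP}))$ killed by the cohomological hypotheses, is exactly what is meant. One small remark: the untwisting of coefficients is already guaranteed by the simple connectivity of $X$ (or of $\mathrm{BS}\mathcal G$), so you need not invoke $\pi_1(\mathcal G/\mathrm{TOP})=0$ for that purpose.
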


However, we will see that some restrictions in Proposition \ref{basicth} on the cohomology groups of $X$ are excessive through the following example.
\begin{example}\label{ex}
	Let $f:S^{4k+1}\to \mm{BS\mathcal G}$ be the classifying map of any spherical fibration over $S^{4k+1}$, $k>0$. It is well-known that $\pi_n(\mm{BS}\mathcal G)$ is isomorphic to $\pi_{n-1}^s$, the $(n-1)$-dimensional stable homotopy group of the sphere for $n>1$ \cite{KirbSieb1977,Rudyak}, which is a finite abelian group by the work of Serre. From the following exact sequences
	$$\pi_{4k+1}(\mm{BSTOP})\to \pi_{4k+1}(\mm{BS\mathcal G})\to \pi_{4k}(\mathcal G/\mm{TOP})=\mb{Z}$$   
	 $f$ has a homotopy lift to $\mm{BSTOP}$.  
\end{example}

Since $\mm{\mathcal G/TOP}$ is an infinite loop space \cite{BoVo}, there is a space $\mm{B(\mathcal G/TOP)}$ such that $\mm{\mathcal G/TOP}\simeq \Omega \mm{B(\mathcal G/TOP)}$.
Thus there is a fibration sequence
$$\mm{\mathcal G/TOP}\to \mm{BSTOP}\to \mm{BS\mathcal G}\stackrel{\gamma}{\to }  \mm{B(\mathcal G/TOP)}$$
Applying fibration theory and local homotopy theory, we have the main theorem of this paper as follows.	  
\begin{theorem}\label{2nfree}
Let $X$ be a finite $1$-connected CW-complex satisfying that $H_{2n}(X;\mb{Z})$ is torsion-free for $n\ge 1$.
If the classifying map $f:X\to \mm{BS\mathcal G}$ of a spherical fibration over $X$  satisfies that
 $$(\gamma\circ f)_\ast:H_{2^{n+1}-1}(X;\mb{Z}/2)\to H_{2^{n+1}-1}(\mm{B(\mathcal G/TOP)};\mb{Z}/2)$$
  is trivial for $n\ge 1$, then the fibration is stably fibre homotopy equivalent to a TOP-spherical fibration.	
\end{theorem}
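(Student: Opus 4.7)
By the fibration sequence $\mm{\mathcal G/TOP}\to \mm{BSTOP}\to \mm{BS\mathcal G}\stackrel{\gamma}{\to} \mm{B(\mathcal G/TOP)}$ recalled just before the theorem, the conclusion is equivalent to the null-homotopy of $g := \gamma \circ f : X \to Z$, where $Z := \mm{B(\mathcal G/TOP)}$. Since $X$ is a finite $1$-connected CW-complex and $Z$ is $2$-connected (as $\pi_n(\mm{\mathcal G/TOP}) = 0$ for $n \le 1$), the arithmetic fracture square for nilpotent spaces reduces the question to showing $g_{(p)}\simeq \ast$ for every prime $p$.

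\emph{Odd primes.} Sullivan's equivalence $(\mm{\mathcal G/TOP})_{(p)} \simeq \mm{BO}_{(p)}$ for odd $p$, together with the odd-primary product decomposition $\mm{BO}_{(p)} \simeq \prod_{i\ge 1}K(\mb{Z}_{(p)},4i)$, yields $Z_{(p)} \simeq \prod_{i\ge 1}K(\mb{Z}_{(p)},4i+1)$ after delooping. The resulting odd-primary obstructions live in $H^{4i+1}(X;\mb{Z}_{(p)})=\mm{Hom}(H_{4i+1}(X;\mb{Z}),\mb{Z}_{(p)})$, the Ext contribution of universal coefficients being killed by the torsion-freeness of $H_{4i}(X;\mb{Z})$. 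I would dispose of these via Sullivan's odd-primary splitting of $\mm{BS\mathcal G}$ through the $J$-homomorphism, which forces the classifying map of a spherical fibration to lift at odd primes.

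\emph{Prime $2$.} This is the heart of the argument. Using Sullivan's $2$-local analysis of $\mm{\mathcal G/TOP}$, I would describe $H^*(Z_{(2)};\mb{Z}/2)$ as a module over the mod-$2$ Steenrod algebra $\mathcal A_2$, and establish the key structural claim that the $\mathcal A_2$-module generators sit precisely in degrees $2^{n+1}-1$ for $n\ge 1$ --- the delooped Kervaire-invariant-one degrees, coinciding with the degrees of Milnor's primitives $Q_n$. Under the torsion-free hypothesis on $H_{2n}(X;\mb{Z})$, universal coefficients identify $H^{2^{n+1}-1}(X;\mb{Z}/2)$ with $\mm{Hom}(H_{2^{n+1}-1}(X;\mb{Z}/2),\mb{Z}/2)$, so the theorem's vanishing hypothesis translates into the statement that $g^{*}$ annihilates an $\mathcal A_2$-generating set of $H^*(Z_{(2)};\mb{Z}/2)$. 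Naturality of Steenrod squares then forces $g^{*}=0$ on the whole of $H^*(Z_{(2)};\mb{Z}/2)$. A Postnikov-tower induction upgrades this cohomological vanishing to $g_{(2)}\simeq \ast$: the $\mb{Z}/2$-stage obstructions in $H^{4i+3}(X;\mb{Z}/2)$ are directly detected on mod-$2$ cohomology, while the $\mb{Z}_{(2)}$-stage obstructions in $H^{4i+1}(X;\mb{Z}_{(2)})$ have vanishing mod-$2$ reduction and are killed inductively by combining the $\delta\mm{Sq}^{2}$-type $k$-invariants linking adjacent Postnikov stages of $Z_{(2)}$ with the torsion-free hypothesis on $H_{2n}(X;\mb{Z})$.

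\emph{Main obstacle.} The deepest step is the structural claim that $H^*(Z_{(2)};\mb{Z}/2)$ is generated as an $\mathcal A_2$-module by classes in the Milnor-primitive degrees $2^{n+1}-1$. Verifying this demands Sullivan's explicit $2$-local model for $\mm{\mathcal G/TOP}$ --- with its Kervaire classes in degrees $4i+2$ and $\mm{BO}$-like classes in degrees $4i$ linked by $\delta\mm{Sq}^{2}$ $k$-invariants --- together with a delooping computation for $\mm{B(\mathcal G/TOP)}$ and a careful bookkeeping of the Steenrod action on the resulting generators. The odd-primary step, though conceptually simpler, similarly hinges on correctly extracting the needed vanishing from the Sullivan/$J$-homomorphism splitting of $\mm{BS\mathcal G}$.
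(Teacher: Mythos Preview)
Your prime-$2$ strategy rests on the claim that $H^*(\mm{B(\mathcal G/TOP)}_{(2)};\mb{Z}/2)$ is generated over $\mathcal A_2$ by classes in degrees $2^{n+1}-1$, and this is false. The paper records (Section~\ref{Seclocalhomo}) that $\mm{B(\mathcal G/TOP)}[2]\simeq \prod_{k\ge 1}\mm{K}(\mb{Z}_{(2)},4k{+}1)\times \mm{K}(\mb{Z}/2,4k{-}1)$, a product of Eilenberg--MacLane spaces; the fundamental classes $\iota_{4k\pm 1}$ in \emph{every} odd degree $\ge 3$ are $\mathcal A_2$-indecomposable, since no Steenrod operation carries one K\"unneth factor into another. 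What actually singles out the degrees $2^{n+1}-1$ is not Steenrod-module structure but the Brumfiel--Madsen--Milgram theorem: $\mathfrak q_*:H_*(\mm{\mathcal G/TOP};\mb{Z}/2)\to H_*(\mm{BSTOP};\mb{Z}/2)$ is injective on the factor $\mb{K}_2=\prod_k\mm{K}(\mb{Z}_{(2)},4k)\times \prod_{4k-2\ne 2^j-2}\mm{K}(\mb{Z}/2,4k{-}2)$. The paper feeds this into the Serre spectral sequence of the pulled-back fibration $\mm{\mathcal G/TOP}\to E\to X$ (Lemma~\ref{varphi} through Theorem~\ref{mod2homo}): the $\mb{K}_2$-classes survive automatically, and the hypothesis on $H_{2^{n+1}-1}(X;\mb{Z}/2)$ is precisely what makes the remaining $\mb{K}_1=\prod_j\mm{K}(\mb{Z}/2,2^j{-}2)$ classes survive; the resulting collapse forces $(\gamma\circ f)^*$ to vanish on all of $H^*(\mm{B(\mathcal G/TOP)};\mb{Z}/2)$.

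Your handling of the $\mb{Z}_{(2)}$-obstructions also fails: since $\mm{B(\mathcal G/TOP)}[2]$ is a product of Eilenberg--MacLane spaces there are \emph{no} nontrivial $k$-invariants --- the $\delta\mm{Sq}^2$ $k$-invariant you invoke belongs to $\mm{\mathcal G/PL}$, not $\mm{\mathcal G/TOP}$ --- so knowing that a class in $H^{4k+1}(X;\mb{Z}_{(2)})$ has zero mod-$2$ reduction does not kill it. The paper disposes of these obstructions (and with them your entire odd-prime discussion) in one line: $\gamma\circ f$ factors through $\mm{BS\mathcal G}$, whose homotopy groups are finite, so $H_n(\mm{BS\mathcal G};\mb{Q})=0$ for $n\ge 1$ and $(\gamma\circ f)_*$ is automatically trivial on rational homology (Theorem~\ref{Qcoefficienthomo}). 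No fracture square and no odd-primary $J$-splitting are needed. With the $\mb{Z}/2$- and $\mb{Q}$-homology maps both trivial, a skeletal induction (Section~\ref{Sechomoonhomology}, using the torsion-free hypothesis to get $H_{2k+1}(X_{2k+1})\cong H_{2k+1}(\vee S^{2k+1})$ and Theorem~\ref{homotopyhomology} to detect maps $S^m\to \mm{B(\mathcal G/TOP)}$) finishes the argument.
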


In particular, we have
\begin{corollary}\label{anyfibTOP}
	Let $X$ be a finite $1$-connected CW-complex satisfying that $H_{2n}(X;\mb{Z})$ is torsion-free for $n\ge 1$. If $H_{2^{n+1}-1}(X;\mb{Z}/2)=0$ for $n\ge 1$, then every spherical fibration over $X$ is stably fibre homotopy equivalent to a TOP-spherical fibration.
\end{corollary}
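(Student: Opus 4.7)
The plan is to deduce the corollary as an immediate consequence of Theorem \ref{2nfree}. The key observation is that the mod-$2$ homological vanishing hypothesis of the corollary forces the obstruction hypothesis of the theorem to be satisfied for purely formal reasons, independently of the fibration one starts with.

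First I would take an arbitrary spherical fibration $\xi$ over $X$ and write down its stable classifying map $\bar f : X \to \mm{B\mathcal G}$. Since $X$ is $1$-connected and $\mm{BS\mathcal G} \to \mm{B\mathcal G}$ is the universal $2$-fold cover, there is no obstruction to lifting $\bar f$ through $\mm{BS\mathcal G}$; this gives a map $f : X \to \mm{BS\mathcal G}$ of the sort required as input for Theorem \ref{2nfree}, exactly as set up in the paragraph preceding that theorem.

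Next I would observe the purely formal point that if $H_{2^{n+1}-1}(X;\mb{Z}/2)=0$, then every homomorphism out of this group is the zero homomorphism; in particular
$$(\gamma\circ f)_\ast : H_{2^{n+1}-1}(X;\mb{Z}/2) \longrightarrow H_{2^{n+1}-1}(\mm{B(\mathcal G/TOP)};\mb{Z}/2)$$
is trivial for every $n \ge 1$, regardless of the choice of $f$. Combined with the hypothesis that $H_{2n}(X;\mb{Z})$ is torsion-free for all $n\ge 1$, which is carried over verbatim from the corollary to the theorem, all hypotheses of Theorem \ref{2nfree} are fulfilled for the map $f$ constructed above.

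Invoking Theorem \ref{2nfree} then yields that $\xi$ is stably fibre homotopy equivalent to a TOP-spherical fibration, and since $\xi$ was arbitrary the corollary follows. There is no genuine obstacle here: the entire substance of the argument is contained in Theorem \ref{2nfree}, and the corollary is just the observation that its obstruction-theoretic hypothesis collapses to a triviality when the relevant mod-$2$ homology of $X$ already vanishes.
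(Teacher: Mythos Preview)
Your proposal is correct and matches the paper's approach exactly: the paper presents Corollary~\ref{anyfibTOP} with the words ``In particular, we have'' immediately after Theorem~\ref{2nfree} and gives no further proof, treating it as the trivial specialization you describe. Your write-up simply makes explicit the one-line observation that vanishing of $H_{2^{n+1}-1}(X;\mb{Z}/2)$ forces the homological hypothesis of Theorem~\ref{2nfree} to hold automatically.
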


Next, we will give some applications of Corollary \ref{anyfibTOP} on the Poincar$\mm{\acute{e}}$ complex, which is one of the fundamental objects in surgery theory. 

 A CW-complex $X$ is a Poincar$\mm{\acute{e}}$ complex of dimension $m$, if there is an element $[X]\in H_m(X;\mb{Z})$ such that the cap product $$[X] \cap : H^q(X;\mb{Z})\to  H_{m-q}(X;\mb{Z})$$ is an isomorphism for all $q$. 
 The Poincar$\mm{\acute{e}}$ complex is a weaker structure than the well-known manifold. For Poincar$\mm{\acute{e}}$ complex, there exists a concept, ``Spivak normal fibration", which is analogous to the stable normal bundle \cite{Hu1994} of a smooth manifold.
 
 An $S^{k-1}$-fibration $\xi=\{\pi: E \to X\}$ is called a Spivak normal fibration \cite{Brow1972book} for a Poincar$\mm{\acute{e}}$ complex $X$ if there is $\alpha\in \pi_{m+k}(T(\xi))$ such that $h(\alpha)\cap U_\xi = [X]$, where $T(\xi)=X\cup_\pi C(E)$ is the mapping cone, $U_\xi\in H^k(T(\xi);\mb{Z})$ is the Thom class; $h: \pi_\ast\to H_\ast$ is the Hurewicz homomorphism.
 In \cite{Spi1967}, Spivak showed that if $\pi_1(X) = 0$ then a Poincar$\mm{\acute{e}}$ complex $X$ has a Spivak normal fibration, unique up to stable fibre homotopy equivalence.   
 Indeed, the Spivak normal fibration exists also, without any assumptions but Poincar$\mm{\acute{e}}$ duality with ordinary integer coefficients (see \cite{Brow1972}).
 
 From \cite{KirSie1969}, closed topological manifolds of dimension other than four are homeomorphic to CW-complexes (Poincar$\mm{\acute{e}}$ complexes in simply connected case), but the question is still open in dimension $4$. Conversely, 
 in differential topology and surgery theory, one studies the obstructions to lifting the structure of Poincar$\mm{\acute{e}}$ complex to the strong type found on a manifold. Novikov and Browder presented the following theorem independently.
 
 \begin{theorem}\label{Smooth}\cite{Brow1972book} \cite{Novi}
 	Let $X$ be a $1$-connected Poincar$\mm{\acute{e}}$ complex of dimension $m \ge  5$ with Spivak normal fibration $\eta$ which is stably fibre homotopy equivalent to an $\mm{O}$-spherical fibration. If 
 	\begin{itemize}
 		\item[(1)] $m$ is odd, or
 		\item[(2)] $m = 4k$ and $\mm{Index}X = (L_k(p_1 (\eta^{-1}),\cdots , p_k (\eta^{-1})))[X]$,
 	\end{itemize}
then there is a homotopy equivalence $f : M\to X$, $M$ a smooth $m$-manifold, such that $\nu = f^\ast(\eta)$, $\nu =$ normal bundle of $M \subset S^{m+k}$. \end{theorem}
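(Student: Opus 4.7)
The plan is to carry out the classical surgery-theoretic argument of Browder and Novikov. Since the Spivak normal fibration $\eta$ is stably fibre homotopy equivalent to an $\mm{O}$-spherical fibration, we may replace it (after stabilization) by a genuine rank-$k$ vector bundle for sufficiently large $k$, with honest Thom space $T(\eta)$ and Thom class $U_\eta\in H^k(T(\eta);\mb{Z})$. The Spivak condition supplies $\alpha\in\pi_{m+k}(T(\eta))$ with $h(\alpha)\cap U_\eta=[X]$.

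First I would apply the Thom--Pontryagin construction: represent $\alpha$ by a map $\hat\alpha\colon S^{m+k}\to T(\eta)$ and perturb it to be smoothly transverse to the zero section $X\hookrightarrow T(\eta)$, which is possible precisely because $\eta$ is now a vector bundle. The preimage $M:=\hat\alpha^{-1}(X)$ is a closed smooth $m$-manifold whose normal bundle in $S^{m+k}$ is pulled back from $\eta$, so that $f:=\hat\alpha|_M\colon M\to X$ is covered by a bundle map $\nu_M\to\eta$. The Spivak condition $h(\alpha)\cap U_\eta=[X]$ translates into $f_\ast[M]=[X]$, i.e.\ $f$ is a degree-one normal map.

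Second, since $X$ is $1$-connected, I would perform surgery on $f$ below the middle dimension to make it $\lfloor m/2\rfloor$-connected. This step is routine: generators of the relative homotopy groups $\pi_i(f)$ for $i<m/2$ are killed by surgering framed embeddings of spheres, with framings supplied by the bundle data. The remaining obstruction to promoting $f$ to a homotopy equivalence then lives in the simply connected surgery group $L_m(\mb{Z})$, which vanishes for odd $m$, equals $\mb{Z}$ (detected by the signature) for $m\equiv 0\pmod 4$, and equals $\mb{Z}/2$ (detected by the Arf invariant) for $m\equiv 2\pmod 4$.

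Finally, I would dispatch the two listed cases. For $m$ odd, $L_m(\mb{Z})=0$ and surgery terminates automatically at a homotopy equivalence. For $m=4k$, the obstruction is, up to the usual normalization, the index difference $\mm{Index}\,M-\mm{Index}\,X$. By Hirzebruch's signature theorem $\mm{Index}\,M=L_k(p_1(TM),\dots,p_k(TM))[M]$; since $\nu_M=f^\ast\eta$ and $TM\oplus\nu_M$ is stably trivial we have $p_i(TM)=f^\ast p_i(\eta^{-1})$, which by degree one evaluates to $L_k(p_1(\eta^{-1}),\dots,p_k(\eta^{-1}))[X]$. The hypothesis of the theorem says exactly that this equals $\mm{Index}\,X$, so the obstruction vanishes and surgery produces the desired homotopy equivalence $f\colon M\to X$ with $\nu_M=f^\ast\eta$. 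The main obstacle is the middle-dimensional surgery step itself, which requires invoking the full Kervaire--Milnor/Browder/Wall machinery to identify the obstruction with the signature in case (2); granted that, the characteristic-class calculation above is the only thing left to verify.
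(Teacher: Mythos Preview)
The paper does not prove this theorem; it is quoted as a classical result with references to Browder's book and Novikov's paper, and your sketch is precisely the standard Browder--Novikov surgery argument those references contain. Your outline is correct as a proof sketch, so there is nothing to compare against in the present paper.
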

 
  The above theorem can be extended to the topological case by  
 \cite{KirSie1969}.
 \begin{theorem}\cite{Brow1971}\label{PLTOP} 
 Let $X$ be a $1$-connected Poincar$\mm{\acute{e}}$ complex of dimension $m \ge  5$. Then $X$ is of the homotopy type of a topological manifold if and only if its Spivak normal fibration is stably fibre homotopy equivalent to a TOP-spherical fibration. 
  \end{theorem}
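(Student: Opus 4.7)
The plan is to prove both implications, mirroring the Browder--Novikov argument (Theorem \ref{Smooth}) but working in the topological category throughout.

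For the ``only if'' direction, suppose $X$ is homotopy equivalent to a closed topological $m$-manifold $M$. By Kirby--Siebenmann \cite{KirbSieb1977}, for sufficiently large $k$ one has an embedding $M \hookrightarrow S^{m+k}$ equipped with a stable $\mm{TOP}$ normal microbundle $\nu_M$. Deleting its $0$-section yields a $\mm{TOP}$-spherical fibration which, by the stable fibre homotopy uniqueness of the Spivak normal fibration \cite{Spi1967}, agrees with the Spivak normal fibration of $X$ (transported across the homotopy equivalence). Hence the Spivak normal fibration of $X$ is stably fibre homotopy equivalent to a $\mm{TOP}$-spherical fibration.

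For the converse, assume that the Spivak normal fibration $\nu_X$ admits a stable $\mm{TOP}$ reduction $\tilde\nu$. A Spivak map $\alpha : S^{m+k} \to T(\nu_X)$ lifts up to homotopy to $T(\tilde\nu)$, and topological transversality \cite{KirbSieb1977} makes it transverse to the zero section $X \subset T(\tilde\nu)$. The preimage $M = \alpha^{-1}(X)$ is then a closed topological $m$-manifold, and $f = \alpha|_M : M \to X$ is a degree-one normal map covered by a bundle map $b : \nu_M \to \tilde\nu$. Its surgery obstruction $\sigma(f, b) \in L_m(\mb{Z})$ vanishes automatically for $m$ odd, since $L_m(\mb{Z}) = 0$ in the simply-connected case. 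For even $m$, the set of stable $\mm{TOP}$ reductions of $\nu_X$ is a torsor over $[X, \mathcal{G}/\mm{TOP}]$, and by the Sullivan--Kirby--Siebenmann computation of $\pi_\ast(\mathcal{G}/\mm{TOP})$ the induced surgery obstruction map $[X, \mathcal{G}/\mm{TOP}] \to L_m(\mb{Z})$ is surjective for $m \ge 5$. One may therefore replace $\tilde\nu$ by another $\mm{TOP}$ reduction so that $\sigma(f, b) = 0$; this absorption of the L-class obstruction into the indeterminacy of the reduction is precisely why no index condition as in Theorem \ref{Smooth}(2) is required in the $\mm{TOP}$ setting. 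Wall's topological surgery theorem then converts $(f, b)$ through a normal cobordism into a homotopy equivalence $M' \to X$ with $M'$ a topological manifold.

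The hardest step is the underlying topological surgery apparatus, namely the existence of normal microbundles in the $\mm{TOP}$ category, topological transversality, and the $\mm{TOP}$ surgery exact sequence, all due to Kirby--Siebenmann and Wall. Once these foundational tools are accepted, the argument is structurally parallel to the smooth Browder--Novikov proof of Theorem \ref{Smooth}, with the key simplification that varying the $\mm{TOP}$ reduction suffices to hit any class in $L_m(\mb{Z})$.
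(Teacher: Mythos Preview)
Your argument is essentially correct and is the standard proof of this result. However, note that the paper does not supply its own proof of this theorem: it is quoted with the citation \cite{Brow1971} and used as a black box from the literature. So there is no ``paper's proof'' to compare against.

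That said, your sketch is accurate. A couple of small clarifications worth tightening:
\begin{itemize}
\item In the surjectivity step for even $m$, the precise mechanism is the collapse map $X \to S^m$ onto the top cell composed with the identification $\pi_m(\mathcal G/\mathrm{TOP}) \cong L_m(\mb{Z})$; this identification is an \emph{isomorphism} in the TOP category for all $m$ (unlike in PL, where the map $\pi_4(\mathcal G/\mathrm{PL}) \to L_4(\mb{Z})$ has index $2$ due to Rokhlin's theorem). This is what makes the TOP statement cleaner than Theorem~\ref{Smooth}.
\item For the ``only if'' direction you appeal to a stable normal microbundle and then to the uniqueness of the Spivak fibration; strictly you should also invoke that the collapse map of a closed regular neighbourhood of $M$ in $S^{m+k}$ furnishes the reduction class $\alpha$ hitting the fundamental class, which is what identifies it with the Spivak data.
\end{itemize}
With those points made explicit your proof is complete and matches the argument Browder gives in \cite{Brow1971}, transported to the TOP category via \cite{KirSie1969,KirbSieb1977}.
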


Due to Theorem \ref{Smooth} and \ref{PLTOP}, the problem of making a general spherical fibration into an $\mm{O}$ or $\mm{TOP}$-spherical fibration (up to stable fibre homotopy equivalence) is central to the theory of surgery.

By Corollary \ref{anyfibTOP} and Theorem \ref{PLTOP}, we get
\begin{theorem}\label{homotomanifold}
	If a finite $1$-connected Poincar$\mm{\acute e}$ complex $X$ of dimension $m \ge  5$ satisfies that (1) $H_{2n}(X)$ is torsion free for $n\ge 1$.
\begin{itemize}
\item[(2)] $ H_{2^{n+1}-1}(X;\mb{Z}/2)=0$ for $n\ge 1$.	
\end{itemize}
Then $X$ is of the homotopy type of a topological manifold.
\end{theorem}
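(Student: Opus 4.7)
The plan is to combine Corollary \ref{anyfibTOP} with Theorem \ref{PLTOP}; the hypotheses on $X$ have been set up so that the conclusion falls out with essentially no extra work.

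First I would invoke Spivak's theorem, as recalled in the excerpt. Since $X$ is a finite $1$-connected Poincaré complex, it admits a Spivak normal fibration $\eta$, unique up to stable fibre homotopy equivalence. In particular, $\eta$ is a spherical fibration over $X$ with some classifying map $f_\eta : X \to \mm{BS\mathcal G}$.

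Next I would observe that conditions (1) $H_{2n}(X;\mb Z)$ torsion-free for $n \ge 1$ and (2) $H_{2^{n+1}-1}(X;\mb Z/2) = 0$ for $n \ge 1$ are precisely the hypotheses of Corollary \ref{anyfibTOP}. Applying that corollary, every spherical fibration over $X$ is stably fibre homotopy equivalent to a $\mm{TOP}$-spherical fibration; in particular this holds for the Spivak normal fibration $\eta$.

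Finally, since $X$ is a $1$-connected Poincaré complex of dimension $m \ge 5$ whose Spivak normal fibration is stably fibre homotopy equivalent to a $\mm{TOP}$-spherical fibration, Theorem \ref{PLTOP} directly yields that $X$ has the homotopy type of a topological manifold. There is no real obstacle at this level of the argument: all of the genuine work has been absorbed into Corollary \ref{anyfibTOP} (and hence into the main Theorem \ref{2nfree} via obstruction-theoretic and local-homotopy input from $\mathcal G/\mm{TOP}$), while Theorem \ref{PLTOP} is the classical manifold-recognition result. The proof is therefore just a chaining of these three ingredients.
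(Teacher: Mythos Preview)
Your proposal is correct and matches the paper's own argument exactly: the paper simply states that Theorem \ref{homotomanifold} follows ``By Corollary \ref{anyfibTOP} and Theorem \ref{PLTOP}'', which is precisely the chain you describe (with Spivak's existence theorem implicit in the appeal to Theorem \ref{PLTOP}).
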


From Theorem \ref{homotomanifold}, a finite $(n-1)$-connected Poincar$\mm{\acute e}$ complex $X$ of dimension $2n$ is of the homotopy type of a topological manifold if $n\ne 2^j-1$. By  surgery theory, we have
\begin{corollary}\label{onetoone}
	Let $n$ be odd so that $n\ne 2^j-1$.
The homotopy types of finite $(n-1)$-connected Poincar$\mm{\acute e}$ complexes of dimension $2n$ are in one-to-one correspondence with the homeomorphism types of $(n-1)$-connected closed topological manifolds of dimension $2n$. 
\end{corollary}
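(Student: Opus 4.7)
The plan is to establish the two directions of the bijection separately: surjectivity via Theorem \ref{homotomanifold}, and injectivity via the topological surgery exact sequence.

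For surjectivity, let $X$ be a finite $(n-1)$-connected Poincar$\mm{\acute e}$ complex of dimension $2n$. The connectivity hypothesis gives $H_k(X;\mb Z)=0$ for $0<k<n$, and Poincar$\mm{\acute e}$ duality gives $H_k(X;\mb Z)=0$ for $n<k<2n$. The universal coefficient theorem then yields $H^n(X;\mb Z)\cong \mm{Hom}(H_n(X;\mb Z),\mb Z)$ (since $H_{n-1}(X)=0$), and Poincar$\mm{\acute e}$ duality identifies this with $H_n(X;\mb Z)$, forcing $H_n(X;\mb Z)$ to be torsion-free. Because $n$ is odd, $H_{2k}(X;\mb Z)$ is nonzero only at $k=0$ and $k=n$ (where it equals $\mb Z$), so hypothesis (1) of Theorem \ref{homotomanifold} holds. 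Moreover $H_k(X;\mb Z/2)$ is nonzero only for $k\in\{0,n,2n\}$; since $2^{j+1}-1$ with $j\ge 1$ is odd and distinct from $0$ and $2n$, it could only equal $n$, which is excluded by the assumption $n\ne 2^j-1$. Hypothesis (2) holds, so Theorem \ref{homotomanifold} produces an $(n-1)$-connected topological manifold $M$ homotopy equivalent to $X$.

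For injectivity, given two such manifolds $M_1,M_2$ and a homotopy equivalence $h:M_1\to M_2$, the aim is to prove that the topological structure set $\mathcal S^{\mm{TOP}}(M_2)$ is a single point, so that $(M_1,h)$ represents the base point and hence $M_1\cong M_2$. Since $\pi_1(M_2)=0$ and $\dim M_2=2n\ge 10$, the simply connected surgery exact sequence reads
$$0=L_{2n+1}(\mb Z)\to \mathcal S^{\mm{TOP}}(M_2)\to [M_2,\mm{\mathcal G/TOP}]\stackrel{\sigma}{\to} L_{2n}(\mb Z)=\mb Z/2,$$
the target being $\mb Z/2$ because $2n\equiv 2\pmod 4$. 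Thus $\mathcal S^{\mm{TOP}}(M_2)=\ker\sigma$, and it suffices to show $\ker\sigma=0$. Since $H_n(M_2)$ is free, $M_2$ admits a minimal CW structure $(\vee^r S^n)\cup_\phi D^{2n}$. Applying $[-,\mm{\mathcal G/TOP}]$ to the cofibration $S^{2n-1}\to \vee^r S^n\to M_2$ and using $\pi_n(\mm{\mathcal G/TOP})=\pi_{2n-1}(\mm{\mathcal G/TOP})=0$ (both odd degrees) yields
$$\pi_{n+1}(\mm{\mathcal G/TOP})^r\to \pi_{2n}(\mm{\mathcal G/TOP})=\mb Z/2\to [M_2,\mm{\mathcal G/TOP}]\to 0,$$
so $[M_2,\mm{\mathcal G/TOP}]$ is a quotient of $\mb Z/2$.

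To finish I would verify that $\sigma$ is injective using the degree-one collapse $c:M_2\to M_2/(\vee^r S^n)=S^{2n}$. This induces the surjection $\pi_{2n}(\mm{\mathcal G/TOP})\to [M_2,\mm{\mathcal G/TOP}]$ above, and by naturality of the surgery obstruction under degree-one maps of simply connected Poincar$\mm{\acute e}$ complexes, the composite
$$\pi_{2n}(\mm{\mathcal G/TOP})\to [M_2,\mm{\mathcal G/TOP}]\stackrel{\sigma}{\to} L_{2n}(\mb Z)$$
agrees with the standard Kervaire/Arf identification $\pi_{2n}(\mm{\mathcal G/TOP})\cong L_{2n}(\mb Z)$, which is an isomorphism $\mb Z/2\to \mb Z/2$. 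Hence either $[M_2,\mm{\mathcal G/TOP}]=0$ or $\sigma$ itself is an isomorphism, and in both cases $\ker\sigma=0$; therefore $\mathcal S^{\mm{TOP}}(M_2)=\{*\}$ and $M_1\cong M_2$. The main obstacle lies in this last paragraph, ensuring that the surgery obstruction detects the top-cell contribution via the Kervaire invariant; the computation is made tractable by the vanishing $\pi_k(\mm{\mathcal G/TOP})=0$ in odd degrees, which kills the $n$-skeleton contribution, together with naturality of the surgery obstruction under the degree-one collapse onto $S^{2n}$.
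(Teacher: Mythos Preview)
Your proof is correct but follows a genuinely different route from the paper's. For injectivity the paper passes to the PL category---observing that $(n-1)$-connected closed topological $2n$-manifolds carry a unique PL structure and that two such are PL-homeomorphic iff homeomorphic---and then invokes Sullivan's thesis bijection $\mm{hT}(M)\leftrightarrow [M-\{pt\},\mathcal G/\mm{PL}]$. Since $M-\{pt\}\simeq \vee S^n$ and $\pi_n(\mathcal G/\mm{PL})=0$ for odd $n$, the right side is a single point and the argument ends there. You instead stay in TOP and run the full surgery exact sequence, which forces you to confront the top-cell contribution to $[M_2,\mathcal G/\mm{TOP}]$ explicitly. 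The paper's approach is shorter because Sullivan's bijection has already absorbed the top-cell analysis; yours is more self-contained and avoids the PL detour, at the cost of the extra step you flag.

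On that step: the phrase ``naturality of the surgery obstruction under degree-one maps'' is not a standard functoriality and should be replaced by a precise statement. The clean justification is that for $\hat g$ generating $\pi_{2n}(\mathcal G/\mm{TOP})$, the normal invariant $c^\ast\hat g$ is represented by the connected-sum normal map $M_2\#K^{2n}\to M_2$ with the Kervaire normal map $K^{2n}\to S^{2n}$; additivity of the Arf surgery obstruction under connected sum then gives $\sigma_{M_2}(c^\ast\hat g)=\sigma_{S^{2n}}(\hat g)=1$. Equivalently, one can simply quote that $\sigma$ is surjective for any closed simply connected manifold of dimension $\ge 5$ (Wall realization), and since you have shown $\lvert[M_2,\mathcal G/\mm{TOP}]\rvert\le 2=\lvert L_{2n}(\mb Z)\rvert$, surjectivity forces $\sigma$ to be a bijection and $\ker\sigma=0$.
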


For a certain odd number $n$ so that $\pi_{n-1}^s=0$, one can also define the Kervaire invariant for finite $(n-1)$-connected Poincar$\mm{\acute e}$ complexes $P$ of dimension $2n$ 
$$\Phi(P)\in \mb{Z}/2$$
following the construction of Kervaire in \cite{Kerva1960}.  

\begin{theorem}\label{decthomotopytype}
	Let $n$ be odd so that $\pi_{n-1}^s=0$.
	\item (1) The homotopy types of $(n-1)$-connected Poincar$\mm{\acute e}$ complexes of dimension $2n$ are determined by the $n$-th Betti number and the Kervaire invariant. 
	
\item (2) The Poincar$\mm{\acute e}$ complex of Kervaire invariant one always exists for any $n$ satisfying the condition and any value of the $n$-th Betti number. 
\end{theorem}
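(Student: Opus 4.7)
The plan is to build $X$ from a minimal CW model with cells only in dimensions $0$, $n$, and $2n$, and to reduce both parts of the theorem to an analysis of the top-cell attaching map. The hypothesis $\pi_{n-1}^s=0$ combined with $n$ odd will collapse the relevant unstable homotopy group $\pi_{2n-1}(S^n)$ to a single copy of $\mb{Z}/2$, which is where the Kervaire data will live.

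First I would show that $H_n(X;\mb{Z})$ is free abelian of rank equal to the $n$-th Betti number $b$. Since $X$ is $(n-1)$-connected, $H_{n-1}(X)=0$, so the universal coefficient theorem gives $H^n(X;\mb{Z})\cong \mm{Hom}(H_n(X),\mb{Z})$, while Poincaré duality identifies $H^n(X;\mb{Z})\cong H_n(X;\mb{Z})$. Hence $X\simeq K\cup_\phi D^{2n}$ with $K=\bigvee^b S^n$ and $\phi\in \pi_{2n-1}(K)$. By Hilton's theorem,
\[
\pi_{2n-1}(K)\;\cong\;\bigoplus_{i=1}^{b}\pi_{2n-1}(S^n)\;\oplus\;\bigoplus_{i<j}\mb{Z}\cdot[\iota_i,\iota_j].
\]
For each diagonal summand, graded commutativity of the Whitehead product forces $2[\iota_n,\iota_n]=0$ when $n$ is odd, while the metastable EHP exact sequence
\[
\pi_{2n+1}(S^{2n+1})\xrightarrow{P}\pi_{2n-1}(S^n)\xrightarrow{E}\pi_{2n}(S^{n+1})
\]
identifies $\mm{image}\,P$ with $\mb{Z}/2\cdot[\iota_n,\iota_n]$. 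Iterating suspension and invoking $\pi_{n-1}^s=0$ then gives $\pi_{2n-1}(S^n)\cong \mb{Z}/2\cdot[\iota_n,\iota_n]$, so that $\phi=\sum_i a_i[\iota_i,\iota_i]+\sum_{i<j}b_{ij}[\iota_i,\iota_j]$ with $a_i\in\mb{Z}/2$ and $b_{ij}\in\mb{Z}$.

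For part (1), the antisymmetric integer matrix $(b_{ij})$ recovers the cup-product pairing on $H^n(X;\mb{Z})$ under Poincaré duality; its unimodularity (forced by Poincaré duality) makes $b$ even and identifies the form with the standard hyperbolic symplectic form. The coefficients $a_i\in\mb{Z}/2$ assemble into a quadratic refinement $q:H_n(X;\mb{Z}/2)\to\mb{Z}/2$, $q(e_i)=a_i$, satisfying $q(x+y)=q(x)+q(y)+\langle x,y\rangle$, whose Arf invariant is by construction the Kervaire invariant $\Phi(X)$ of \cite{Kerva1960}. A homotopy equivalence between two such complexes induces a self-equivalence of $K$, which acts on the pair $(a,b)$ through the symplectic group; since the orbits on quadratic refinements of a fixed symplectic form are classified by the Arf invariant, the homotopy type of $X$ is determined by $(b,\Phi(X))$. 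For part (2), I would realize each value of $\Phi$ explicitly: writing $b=2k$, the attaching map $\phi_0=\sum_{i=1}^{k}[\iota_{2i-1},\iota_{2i}]$ gives $\Phi=0$, while $\phi_1=\phi_0+[\iota_1,\iota_1]+[\iota_2,\iota_2]$ gives $\Phi=1$, since in the latter case $q(e_1)=q(e_2)=1$ and $q$ vanishes on the remaining symplectic planes, producing Arf invariant $1$.

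The main obstacle will be the identification in part (1) of the mod-$2$ coefficients $a_i$ with Kervaire's original quadratic refinement: one must reconcile the homotopy-theoretic definition (coefficient of $[\iota_i,\iota_i]$ in $\phi$) with the functional Steenrod-square construction of $\Phi(X)$ following \cite{Kerva1960}, and verify that a symplectic change of basis in $K$ acts on the $a_i$ exactly as a quadratic refinement should transform. The hypothesis $\pi_{n-1}^s=0$ is crucial here: it eliminates any residual stable torsion in $\pi_{2n-1}(S^n)$ that would otherwise obstruct an unambiguous $\mb{Z}/2$-valued definition of the $a_i$.
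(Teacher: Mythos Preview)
Your outline is correct and follows the same broad architecture as the paper: minimal cell structure $X\simeq(\vee^{b}S^n)\cup_\phi e^{2n}$, Hilton's decomposition of $\pi_{2n-1}(\vee S^n)$, the EHP computation giving $\pi_{2n-1}(S^n)\cong\mb{Z}/2$ under the hypothesis (the paper observes explicitly that $\pi_{n-1}^s=0$ forces $n\ne 1,3,7$, so Adams applies and $[\iota_n,\iota_n]\ne 0$), and identification of the free part of $\phi$ with the intersection form via Proposition~\ref{whitepro}. The point you flag as the ``main obstacle'' --- that the coefficients $a_i$ of $[\iota_i,\iota_i]$ coincide with Kervaire's $\varphi_0(\ell_i)$ --- is precisely the content of the paper's Lemma~\ref{kevarinv}, proved there by projecting to $S^n_j\cup_{\beta_j}e^{2n}$ and comparing with the $2n$-skeleton of $\Omega S^{n+1}$.

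Where you genuinely diverge is in the normalization step. You invoke the algebraic classification of $\mb{Z}/2$-quadratic refinements of a fixed symplectic form by their Arf invariant, together with the observation that every symplectic matrix is realized by a self-equivalence of $\vee S^n$ and that bilinearity of the Whitehead product makes the $a_i$ transform as a quadratic refinement. The paper instead works by hand: it writes down explicit self-equivalences of $\vee S^n$ built from the coproduct $\Delta:S^n\to S^n\vee S^n$ and degree-one identifications, and uses the identities $[\iota_i+\iota_j,\iota_k]=[\iota_i,\iota_k]+[\iota_j,\iota_k]$ and $0\ne[\iota_j,\iota_j]\in\mb{Z}/2$ to reduce the array $(\beta_1,\dots,\beta_{2k})$ step by step to $(0,\dots,0)$ when $\Phi=0$ or $(1,1,0,\dots,0)$ when $\Phi=1$. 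Your route is cleaner and more conceptual, but requires importing the Arf-invariant classification and checking carefully that the self-equivalence action on $\pi_{2n-1}(\vee S^n)$ matches the algebraic symplectic action on quadratic forms; the paper's route is more elementary and self-contained, effectively reproving the relevant special case of that classification inside the homotopy category.
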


For example, in case of $n=5$, $13$, $\pi_{n-1}^s=0$ \cite{Toda}.

For such $n$ as in Theorem \ref{decthomotopytype}, let $P$ be an $(n-1)$-connected Poincar$\mm{\acute e}$ complex of dimension $2n$ with $H^n(P;\mb{Z})=\mb{Z}\oplus \mb{Z}$. If the Kervaire invariant of $P$ equals to $0$, $P$ is homotopy equivalent to $S^n\times S^n$.

By Corollary \ref{onetoone} and Theorem \ref{decthomotopytype}, we also have a classification for $(n-1)$-connected topological manifolds of dimension $2n$.
\begin{corollary}
	Let $n\ne 2^j-1$ be odd so that $\pi_{n-1}^s=0$. 
\item (1) The homeomorphism types of $(n-1)$-connected closed  topological manifolds of dimension $2n$ are determined by the $n$-th Betti number and the Kervaire invariant. 
	
\item (2) The closed topological manifold of Kervaire invariant one always exists for any $n$ satisfying the condition and any value of the $n$-th Betti number. 
	In particular, it does not admit any smooth structure when additionally $n\ne 1$ mod $8$.
\end{corollary}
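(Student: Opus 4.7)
The plan is to assemble Corollary \ref{onetoone} and Theorem \ref{decthomotopytype} with a classical vanishing theorem for the Kervaire invariant of smooth framed manifolds.

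For part (1), since $n \ne 2^j - 1$, Corollary \ref{onetoone} provides a bijection between homotopy types of $(n-1)$-connected Poincaré complexes of dimension $2n$ and homeomorphism types of $(n-1)$-connected closed topological $2n$-manifolds. Theorem \ref{decthomotopytype}(1) classifies the former by the $n$-th Betti number and the Kervaire invariant, and both invariants transfer across the bijection since they are homotopy invariants of the underlying Poincaré complex. For the existence clause in part (2), I would combine Theorem \ref{decthomotopytype}(2) (which yields a Poincaré complex with prescribed Betti number and Kervaire invariant one) with Corollary \ref{onetoone} to realize it by a closed topological manifold $M$ inheriting the same invariants.

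The non-smoothability clause is the substantive step. I would argue by contradiction: suppose $M$ carries a smooth structure. Since $n$ is odd and $n \not\equiv 1 \pmod 8$, Bott periodicity gives $\pi_{n-1}(\mm{SO}) = 0$. Because $M \setminus \{\mathrm{pt}\}$ is homotopy equivalent to a wedge of $n$-spheres, its stable tangent bundle (classified by a map into $\mm{BSO}$ and hence by an element of $\pi_n(\mm{BSO}) = \pi_{n-1}(\mm{SO})$) is thus trivial, and the resulting framing is unique up to homotopy because the potential differences also lie in $\pi_{n-1}(\mm{SO}) = 0$. With respect to this framing, the quadratic refinement of the intersection form defining $\Phi(M)$ coincides with the classical Kervaire quadratic form of the framed $2n$-manifold $M \setminus \{\mathrm{pt}\}$, so $\Phi(M)$ equals the framed Kervaire invariant. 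By Browder's theorem, the framed Kervaire invariant in dimension $2n$ vanishes whenever $n$ is not of the form $2^j - 1$, and our hypothesis $n \ne 2^j - 1$ excludes precisely these cases. Hence $\Phi(M) = 0$, contradicting $\Phi(M) = 1$, so $M$ admits no smooth structure.

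The main obstacle I foresee is the compatibility of the author's generalized Kervaire invariant — defined for arbitrary Poincaré complexes via the Spivak normal fibration, following Kervaire's 1960 construction — with the classical framed Kervaire invariant in the smooth framed setting. This compatibility must be verified in order to legitimately import Browder's vanishing theorem, and it is the only non-formal ingredient of the argument; everything else is a direct consequence of the earlier results in the paper.
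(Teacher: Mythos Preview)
Your argument for part (1) and for existence in part (2) matches the paper exactly: combine Corollary \ref{onetoone} with Theorem \ref{decthomotopytype}. The compatibility concern you flag (generalized versus classical Kervaire invariant) is legitimate but not the real obstacle.

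The genuine gap is in the non-smoothability argument. From $\pi_{n-1}(\mm{SO})=0$ you correctly deduce that the stable tangent bundle is trivial over $M\setminus\{\mathrm{pt}\}\simeq\vee S^n$, i.e.\ that $M$ is \emph{almost parallelizable}. But Browder's theorem \cite{BrowderW1969} concerns \emph{closed} framed manifolds; the object $M\setminus\{\mathrm{pt}\}$ to which you appeal is not closed, and you have not shown that the framing extends over the top cell of $M$. The obstruction to that extension lies in $H^{2n}(M;\pi_{2n-1}(\mm{SO}))$, and $\pi_{2n-1}(\mm{SO})$ is not automatically zero for odd $n$ (e.g.\ $2n\equiv 2\bmod 8$ gives $\pi_{2n-1}(\mm{SO})=\mb{Z}/2$).

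The paper closes this gap with an additional lemma (Lemma \ref{framed}): for odd $n$, an almost parallelizable $(n-1)$-connected $2n$-manifold is in fact framed. The point is that the extension obstruction always lies in the kernel of the $J$-homomorphism $J:\pi_{2n-1}(\mm{SO})\to\pi_{2n-1}^s$ (this is Milnor--Kervaire \cite{MilnorK1858}); for $2n\equiv 6\bmod 8$ one has $\pi_{2n-1}(\mm{SO})=0$, while for $2n\equiv 2\bmod 8$ Adams \cite{Adams1966} gives $\ker J_{2n-1}=0$. Either way the obstruction vanishes, $M$ is framed as a closed manifold, and only then does Browder's theorem apply. You should insert this step between almost-parallelizability and the invocation of Browder.
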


The above corollary presents an answer to the question raised by Kervaire in \cite{Kerva1960}.

The plan of this paper is as follows.
We first introduce some preliminary knowledge about the local homotopy theory in Section \ref{Seclocalhomo}, then prove a preparatory lemma in Section \ref{Sechomoonhomology}, which reduces the proof of Theorem \ref{2nfree} to the computations for the homomorphism 
$$(\gamma\circ f)_\ast:H_\ast(X;G)\to H_\ast(\mm{B(\mathcal G/TOP)};G)$$
where $G=\mb{Z}/2$ or $\mb{Q}$.
 These computations are divided into two cases, shown in Sections \ref{SecZ2homo} and \ref{SecQhomo} respectively. Next, we recall the surgery exact sequence to prove Corollary \ref{onetoone} in Section \ref{SecSurgery}. At last, we investigate the classification for certain highly connected Poincar$\mm{\acute e}$ complexes and topological manifolds in Section \ref{classifsec}, and detect the smooth structure in Section \ref{smoothsec}.

\section{Local homotopy theory}\label{Seclocalhomo} 
 In the Sullivan-Quillen proof of the Adams conjecture on the image of the $J$-homomorphism, and in Sullivan's work on BPL and $\mm{\mathcal G/PL}$ (see \cite{KirbSieb1977,MaMi}), it has become necessary to systematically exclude $p$-primary information about CW-complexes for certain primes $p\ge 2$. In this respect, the local homotopy theory is a powerful tool. We only present fundamental definitions and conclusions in this section. For more details, one can refer to \cite{MayPon2012} \cite{Sull2005}.    

We first discuss some algebraic constructions.
Let $T$ be a set of primes in the ring $\mb{Z}$ of integers. We will write ``$\mb{Z}$ localized at $T$" \cite{Hun}
$$\mb{Z}_{(T)}=S^{-1}\mb{Z}$$
where $S$ is the multiplicative set generated by the primes not in $T$.
\begin{definition}
	If $G$ is an abelian group then the localization of $G$ with respect to a set of primes $T$, $G_{(T)}$ is the $\mb{Z}_{(T)}$-module $G\otimes \mb{Z}_{(T)}$ where the right $\mb{Z}_{(T)}$-action is on the right factor $\mb{Z}_{(T)}$ by the multiplication. The ``localization homomorphism $\phi:G\to G_{(T)}$" is $\phi(g)=g\otimes 1$.
\end{definition}
$G_{(T)}$ is also an abelian group. An abelian group $G$ is said to be ``$T$-local" if it admits a structure of $\mb{Z}_{(T)}$-module, necessarily unique. The localization homomorphism $\phi$ is universal, which means that for any homomorphism $f : G \to  A$, where $A$ is $T$-local, there is a unique homomorphism $\tilde f $ that makes the following diagram commute.
 \[
\xymatrix@C=0.8cm{
G\ar[rd]_-{f}\ar[rr]^-{\phi}&&G_{(T)}\ar@{.>}[ld]^-{\tilde f}\\
  & A&  
}
\]

Next, we define a localization of spaces.
\begin{definition}
 $X_T$ is a $T$-local space iff $\pi_\ast X_T$ is $T$-local. For any space $X$, a map of $X$ into a local space $X_T$, $\phi:X\to X_T$, is a localization of $X$ if it is universal, i.e. given $f : X \to  Z$ from $X$ to a  $T$-local space $Z$, then there is a map $\tilde{ f}$, unique up to homotopy, that makes the following diagram commute up to homotopy.
 \[
\xymatrix@C=0.8cm{
X\ar[rd]_-{f}\ar[rr]^-{\phi}&&X_T\ar@{.>}[ld]^-{\tilde f}\\
  & Z&  
}
\]
\end{definition} 

\begin{theorem}\cite{Sull2005}\label{localexist}
	Any 1-connected space has a localization.
\end{theorem}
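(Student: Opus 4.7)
The plan is to build the localization $X_T$ by induction up the Postnikov tower of $X$, exploiting that $1$-connectedness forces every homotopy group to be abelian, with trivial fundamental group action on higher homotopy, so that each stage is a principal fibration with an Eilenberg--MacLane fiber. First I would replace $X$ by a CW approximation and form its Postnikov tower $\{X_n\}$ with $X_2 = K(\pi_2(X),2)$, stage-to-stage fibrations $X_{n+1}\to X_n$ classified by $k$-invariants $k_{n+1}\in H^{n+2}(X_n;\pi_{n+1}(X))$, and $X=\varprojlim X_n$ up to weak equivalence.

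Second, I would localize the tower. The abelian-group localization $(-)\otimes \mathbb{Z}_{(T)}$ turns each Eilenberg--MacLane space $K(\pi,n)$ into its localization $K(\pi_{(T)},n)$, which is easily seen to be $T$-local and to enjoy the universal property (maps into a $T$-local Eilenberg--MacLane space factor uniquely through the localized coefficients, by naturality of representability of cohomology). Now define inductively $X_{2,T}:=K(\pi_2(X)_{(T)},2)$ with the obvious localization map $X_2\to X_{2,T}$. Given a localization $\phi_n\colon X_n\to X_{n,T}$, the $k$-invariant $k_{n+1}$ pulled up through $\phi_n$ extends (uniquely up to homotopy) to a class $k_{n+1,T}\in H^{n+2}(X_{n,T};\pi_{n+1}(X)_{(T)})$ because the target coefficient group is $T$-local and one can check that $\phi_n^*$ is an isomorphism on cohomology with $T$-local coefficients via the Serre spectral sequence of the stage fibration. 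Pulling back the path-loop fibration over $K(\pi_{n+1}(X)_{(T)},n+2)$ along $k_{n+1,T}$ produces the next stage $X_{n+1,T}$ together with a compatible map $\phi_{n+1}\colon X_{n+1}\to X_{n+1,T}$. Finally $X_T$ is the homotopy inverse limit (telescope) of the $X_{n,T}$, and the $\phi_n$ assemble into $\phi\colon X\to X_T$; $T$-locality of $X_T$ follows from the five lemma applied to the long exact sequence of each stage, since each $\pi_k(X_{n,T})\cong \pi_k(X)_{(T)}$ by a second induction.

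Third, to verify the universal property I would take any map $f\colon X\to Z$ with $Z$ a $T$-local space and construct, by induction on $n$, lifts $f_n\colon X_{n,T}\to Z_n$ (where $Z_n$ is the $n$-th Postnikov section of $Z$) extending the composite $X\to X_n\to Z_n$. Existence and uniqueness reduce to obstruction-theoretic statements: the primary obstructions lie in $H^{*}(X_{n,T};\pi_{*}(Z))$ with $T$-local coefficients, and the map $\phi_n^*$ identifies these with obstructions already vanishing on $X_n$ by the given map $f$; uniqueness of the extension similarly reduces to the vanishing of one lower degree in the same local cohomology. Passing to the limit over $n$ and then over $Z\to Z_n$ produces a unique (up to homotopy) factorization $\tilde f\colon X_T\to Z$, establishing the universal property.

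The main obstacle I expect is the careful inductive step showing that $\phi_n^*$ is an isomorphism on cohomology with $T$-local coefficients (equivalently that $\phi_n$ is a $\mathbb{Z}_{(T)}$-homology equivalence); this is what makes both the lift of the $k$-invariants and the obstruction-theoretic verification of the universal property work, and it must be bootstrapped along with the simultaneous induction showing $\pi_k(X_{n,T})\cong \pi_k(X)_{(T)}$. Once that key fact is in hand, the rest of the argument is formal.
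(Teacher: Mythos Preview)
The paper does not supply its own proof of this theorem: it is quoted as a result from \cite{Sull2005}, with the remark that \cite{MayPon2012} proves it more generally for nilpotent spaces. So there is no in-paper argument to compare against.

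That said, your Postnikov-tower construction is precisely the standard proof found in those references. The outline is correct: localize Eilenberg--MacLane spaces by localizing the coefficient group, push the $k$-invariants forward using that $\phi_n$ induces an isomorphism on cohomology with $T$-local coefficients, and assemble the tower. One small correction of language: the inverse limit of a Postnikov tower is not a ``telescope'' (that word is reserved for direct/colimit constructions); you want the genuine homotopy inverse limit, and for a $1$-connected space the natural map $X\to \varprojlim X_n$ is a weak equivalence without $\lim^1$ issues. Your identification of the key technical step --- that $\phi_n$ is a $\mb{Z}_{(T)}$-homology equivalence, proved simultaneously with $\pi_k(X_{n,T})\cong \pi_k(X)_{(T)}$ via the Serre spectral sequence and the five lemma --- is exactly right, and once established the obstruction-theoretic verification of universality goes through as you describe.
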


In \cite{MayPon2012}, the above theorem is proved in the category of nilpotent spaces. Here, we only consider 1-connected spaces.

There are equivalent descriptions for local spaces and localization.
\begin{theorem}\cite{Sull2005}\label{equilocal} For a map $\phi: X \to  X^\prime$ of arbitrary 1-connected spaces, the following are equivalent \item (1) $\phi$ is a localization.\item (2) $\phi$ localizes integral homology groups.
\[\xymatrix@C=0.8cm{\tilde H_\ast(X)\ar[rd]_-{\otimes 1}\ar[rr]^-{\phi_\ast}&&\tilde H_\ast(X^\prime)\ar[ld]^-{\cong}\\& \tilde H_\ast(X)\otimes \mb{Z}_{(T)}&  }\]
\item (3) $\phi$ localizes homotopy groups.
\[\xymatrix@C=0.8cm{\pi_\ast(X)\ar[rd]_-{\otimes 1}\ar[rr]^-{\phi_\ast}&&\pi_\ast(X^\prime)\ar[ld]^-{\cong}\\&\pi_\ast(X)\otimes \mb{Z}_{(T)}&  }\]
\end{theorem}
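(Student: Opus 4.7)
The plan is to exploit the existence theorem for localizations (Theorem \ref{localexist}) to produce a reference localization $\phi_0: X \to X_T$ known \emph{by construction} to satisfy (3), and then pivot all three equivalences through this reference. The key algebraic input will be the Serre class $\mathcal C_T$ of abelian groups $A$ with $A \otimes \mb{Z}_{(T)} = 0$: this class is closed under subobjects, quotients and extensions, and a homomorphism between $T$-local abelian groups whose kernel and cokernel lie in $\mathcal C_T$ is automatically an isomorphism (since localization is the identity on $T$-local modules).

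For (1) $\Leftrightarrow$ (3), I would use that the reference $\phi_0$ satisfies both conditions. If $\phi$ satisfies (1), applying the universal property to the pair $(\phi,\phi_0)$ in both directions produces a homotopy equivalence $X' \simeq X_T$ commuting with $\phi$ and $\phi_0$, whence $\phi$ localizes $\pi_\ast$. Conversely, if $\phi$ satisfies (3), then $\pi_\ast(X')$ is $T$-local, so $X'$ is a $T$-local space; by universality of $\phi_0$ there exists $g: X_T \to X'$ with $g\circ \phi_0 \simeq \phi$. On homotopy groups both $\phi$ and $\phi_0$ are localizations, so $g_\ast$ is an isomorphism between $T$-local groups; the Whitehead theorem then promotes $g$ to a homotopy equivalence, and the universal property of $\phi_0$ transfers to $\phi$.

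For (2) $\Leftrightarrow$ (3), the engine is the mod-$\mathcal C_T$ Hurewicz and Whitehead theorems for $1$-connected spaces. First I would show that a $1$-connected space $Y$ has $T$-local homotopy groups if and only if it has $T$-local reduced integral homology: the Serre spectral sequence, combined with the fact that $H_\ast(K(A,n);\mb{Z})$ is $T$-local whenever $A$ is $T$-local, gives one direction via a Postnikov-tower induction, and the mod-$\mathcal C_T$ Hurewicz theorem applied inductively to the Postnikov stages gives the converse. Second, I would apply the mod-$\mathcal C_T$ Whitehead theorem to $\phi$ itself, which shows that $\phi_\ast$ has kernel and cokernel in $\mathcal C_T$ on every $\pi_n$ if and only if the same holds for $\phi_\ast$ on every $\tilde H_n$. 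Combining these two facts, both (2) and (3) become equivalent to the single conjunction ``$X'$ is $T$-local, and $\phi_\ast$ has kernel and cokernel in $\mathcal C_T$,'' and this is precisely what the displayed ``$\otimes 1$'' commutative triangles encode.

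The main obstacle I anticipate lies in the careful bookkeeping of the Serre-class step: one must verify that $\mathcal C_T$ really is a Serre class with the tensor-product and Tor closure properties needed for the Serre spectral sequence argument on $K(A,n)$ to go through, and then assemble a Postnikov-tower induction that transports the ``$T$-local'' condition cleanly between homotopy and homology. Once these Serre-class inputs are secured, the passage between the displayed ``$\otimes 1$'' diagrams and the kernel/cokernel formulation is purely formal, and the reduction to the existence theorem handles the rest.
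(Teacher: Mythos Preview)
The paper does not supply its own proof of this theorem; it is quoted from Sullivan \cite{Sull2005} as background in Section~\ref{Seclocalhomo} and used without argument. There is therefore nothing in the paper to compare against.

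Your outline is the standard route and is essentially Sullivan's own argument: the Serre class $\mathcal C_T$ together with the mod-$\mathcal C_T$ Hurewicz and Whitehead theorems drives the equivalence (2)~$\Leftrightarrow$~(3), while the existence theorem (Theorem~\ref{localexist}) combined with the ordinary Whitehead theorem handles (1)~$\Leftrightarrow$~(3). One small caution: your pivot through a reference localization $\phi_0$ ``known by construction to satisfy (3)'' appeals to a specific construction (Postnikov-tower localization, localizing each $K(\pi_n,n)$) rather than to the bare existence statement of Theorem~\ref{localexist}. That is fine in Sullivan's setting, where the construction does localize homotopy stage by stage, but you should make the dependency explicit so the argument is not circular. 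Alternatively, you can avoid the reference $\phi_0$ altogether in proving (3)~$\Rightarrow$~(1): once (2)~$\Leftrightarrow$~(3) is in hand, a map $\phi$ satisfying (3) induces, for any $T$-local $Z$, a bijection $[X',Z]\to[X,Z]$ by obstruction theory (the obstruction groups $H^{n+1}(\text{cofiber};\pi_n Z)$ vanish because the cofiber has $\mathcal C_T$-homology and $\pi_n Z$ is $T$-local), which is exactly the universal property.
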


Finally, we discuss the $2$-local homotopy type of $\mm{B(\mathcal G/TOP)}$.
By the homotopy groups of $\mm{\mathcal {G}/TOP}$, we have $\pi_n(\mm{B(\mathcal {G}/TOP}))=0$ for $n=2i$, $\mb{Z}_2$ for $n=4i-1$, $\mb{Z}$ for $n=4i+1$. 

Let $T=\{2\}$,
 $\mm{B(\mathcal G/TOP)}[2]=\mm{B(\mathcal G/TOP)}_{T}$ be the $2$-localization of $\mm{B(\mathcal G/TOP)}$. 
   By \cite{MaMi}, $$\mm{B(\mathcal G/TOP)}[2]\simeq \Pi_{k=1}^\infty \mm{K}(\mb{Z}_{(2)},4k+1)\times \mm{K}(\mb{Z}/2,4k-1).$$
\begin{lemma}\label{SktoBGTOP}
	 A map $\iota: S^{n}\to \mm{B(\mathcal G/TOP)}$ is trivial if and only if 
	 $$\iota_\ast:H_n(S^n;G)\to H_n(\mm{B(\mathcal G/TOP)};G)$$
 is trivial where $G=\mb{Z}$ for $n=1\mod 4$, $\mb{Z}/2$ for $n=-1\mod 4$.
\end{lemma}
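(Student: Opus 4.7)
The plan is to reduce the problem to classifying maps into a single Eilenberg--MacLane space via $2$-localization. The ``only if'' direction is immediate from functoriality of homology, so the real content is in the converse. Apply the $2$-localization map $\phi:\mm{B(\mathcal G/TOP)}\to \mm{B(\mathcal G/TOP)}[2]$ furnished by Theorem \ref{localexist}. By Theorem \ref{equilocal}(3), $\phi$ induces $(-)\otimes \mb{Z}_{(2)}$ on homotopy groups. For $n\equiv 1\mod 4$, $\pi_n(\mm{B(\mathcal G/TOP)})=\mb{Z}$ embeds into $\mb{Z}_{(2)}$; for $n\equiv -1\mod 4$, $\pi_n=\mb{Z}/2$ is already $2$-local. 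In either case $[\iota]=0$ in $\pi_n(\mm{B(\mathcal G/TOP)})$ if and only if $[\phi\circ\iota]=0$, i.e.\ if and only if $\phi\circ\iota$ is null-homotopic.

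Next, exploit the splitting
$$\mm{B(\mathcal G/TOP)}[2]\simeq \Pi_{k=1}^\infty \mm{K}(\mb{Z}_{(2)},4k+1)\times \mm{K}(\mb{Z}/2,4k-1).$$
Each factor $K(\pi_i,i)$ contributes to the total $\pi_n$ only when $i=n$, so $\pi_n(\mm{B(\mathcal G/TOP)}[2])$ is detected by the projection $p_n$ onto the $n$-th factor $K(A,n)$, where $A=\mb{Z}_{(2)}$ for $n\equiv 1\mod 4$ and $A=\mb{Z}/2$ for $n\equiv -1\mod 4$. Hence $\phi\circ\iota$ is null-homotopic iff $p_n\circ\phi\circ\iota:S^n\to K(A,n)$ is null-homotopic.

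A map $S^n\to K(A,n)$ is classified by its effect on $n$-th homology with the coefficients that make the target equal to $A$: by Hurewicz, $H_n(K(\mb{Z}_{(2)},n);\mb{Z})=\mb{Z}_{(2)}$ and $H_n(K(\mb{Z}/2,n);\mb{Z}/2)=\mb{Z}/2$, and the map is null-homotopic precisely when the fundamental class of $S^n$ is sent to zero. Thus $p_n\circ\phi\circ\iota\simeq *$ is equivalent to $(p_n\circ\phi\circ\iota)_\ast=0$ on $H_n(-;G)$. But by functoriality $(p_n\circ\phi\circ\iota)_\ast=(p_n)_\ast\circ\phi_\ast\circ\iota_\ast$, so this follows from the hypothesis that $\iota_\ast$ is zero on $H_n(-;G)$. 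Stringing these equivalences together gives the lemma.

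The main hurdle is really a coefficient-bookkeeping verification: one must confirm that the specific $G$ assigned to each residue class of $n$ modulo $4$ is exactly what is needed so that $H_n(-;G)$ captures the fundamental-class obstruction in the $2$-local piece $K(A,n)$. Once this alignment is in place, the proof is a routine assembly of the localization theorem and the classification of maps into Eilenberg--MacLane spaces.
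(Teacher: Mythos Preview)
Your argument is correct and follows essentially the same approach as the paper's proof: compose with the $2$-localization $\phi$ and apply the Hurewicz homomorphism, using the product decomposition of $\mm{B(\mathcal G/TOP)}[2]$ into Eilenberg--MacLane spaces to detect the homotopy class on $H_n(-;G)$. The paper compresses all of this into a single sentence, whereas you have spelled out the intermediate steps (injectivity of $\phi_\ast$ on $\pi_n$, projection to the relevant $K(A,n)$ factor, and the coefficient matching), but the underlying idea is identical.
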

\begin{proof}
The lemma follows by applying the Hurewicz homomorphism on the composition of the map $\iota$ and $\phi:\mm{B(\mathcal G/TOP)}\to \mm{B(\mathcal G/TOP)}[2]$.
\end{proof} 

For dimension $4k+1$, we have a more precise lemma.
\begin{lemma}\label{homotopy4njia1}
	 A map $\iota: S^{4k+1}\to \mm{B(\mathcal G/TOP)}$ is nontrivial if and only if its image $\mm{H}(\iota)$ equals to $sl_{4k+1}+R\in H_{4k+1}(\mm{B(\mathcal G/TOP)};\mb{Z})$ under the Hurewicz homomorphism $\mm{H}$, where $0\ne s\in \mb{Z}$, $l_{4k+1}$ is a free generator, $R$ denotes other components. Moreover, under the composite map of the induced homomorphism by the $2$-localization and the projection  
	$$ H_\ast(\mm{B(\mathcal G/TOP)};\mb{Z})\stackrel{\phi_\ast}{\to} H_\ast(\mm{B(\mathcal G/TOP)}[2];\mb{Z})\to H_\ast(\mm{K}(\mb{Z}_{(2)},4n+1);\mb{Z})$$  
	the image of $R$ is zero, the image of $l_{4k+1}$ corresponds to a generator of $H_{4k+1}(\mm{K}(\mb{Z}_{(2)},4k+1);\mb{Z})$.
\end{lemma}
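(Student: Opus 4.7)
The plan is to reduce everything to the 2-localization of $\mm{B(\mathcal G/TOP)}$, where the target splits as a product of Eilenberg--MacLane spaces, and to track $\iota$ through the Hurewicz map on a single factor. The starting observation is that, as recalled just above the lemma, $\pi_{4k+1}(\mm{B(\mathcal G/TOP)}) \cong \mb{Z}$ is torsion-free, so the homotopy class of $\iota$ is determined by a unique integer $s$; thus $\iota$ is nontrivial if and only if $s \ne 0$.

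Next I would consider the composite
$$S^{4k+1} \xrightarrow{\iota} \mm{B(\mathcal G/TOP)} \xrightarrow{\phi} \mm{B(\mathcal G/TOP)}[2] \xrightarrow{\pi_k} \mm{K}(\mb{Z}_{(2)}, 4k+1),$$
where $\pi_k$ is the projection onto the factor in the splitting of $\mm{B(\mathcal G/TOP)}[2]$ recorded at the end of Section \ref{Seclocalhomo}. By Theorem \ref{equilocal}(3), $\phi$ induces the localization on homotopy groups, so $(\pi_k\circ\phi)_\ast$ is the inclusion $\mb{Z} \hookrightarrow \mb{Z}_{(2)}$ on $\pi_{4k+1}$. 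Composing with the Hurewicz isomorphism of the Eilenberg--MacLane factor in its lowest nontrivial dimension, the fundamental class of $S^{4k+1}$ is sent to $s$ times a generator of $H_{4k+1}(\mm{K}(\mb{Z}_{(2)}, 4k+1); \mb{Z}) \cong \mb{Z}_{(2)}$.

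For the integral statement, I would set $l_{4k+1} \in H_{4k+1}(\mm{B(\mathcal G/TOP)}; \mb{Z})$ to be the Hurewicz image of a fixed generator $g$ of $\pi_{4k+1}(\mm{B(\mathcal G/TOP)})$. From the previous step, $\phi_\ast(l_{4k+1})$ projects to a generator of $H_{4k+1}(\mm{K}(\mb{Z}_{(2)}, 4k+1); \mb{Z})$, so $l_{4k+1}$ has infinite order and spans a free summand of $H_{4k+1}(\mm{B(\mathcal G/TOP)}; \mb{Z})$, i.e.\ is a legitimate ``free generator.'' Naturality of $\mm{H}$ then gives $\mm{H}(\iota) = s\, l_{4k+1}$, which we rewrite as $s\, l_{4k+1} + R$ with respect to any direct sum decomposition of $H_{4k+1}(\mm{B(\mathcal G/TOP)};\mb{Z})$ containing $l_{4k+1}$ as one summand generator; by construction $R$ lies in the kernel of $\pi_k \circ \phi_\ast$, giving the ``moreover'' clause.

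The point I expect to need most care is checking that $\pi_k \circ \phi_\ast$ sends $l_{4k+1}$ to an actual generator, and not to a nonzero multiple, since this is what pins down the integer $s$ on the nose. This hinges on two standard facts: the Hurewicz map of $\mm{K}(\mb{Z}_{(2)}, 4k+1)$ in its lowest nontrivial dimension is the identity on $\mb{Z}_{(2)}$, and by Theorem \ref{equilocal}(3) the map $\pi_k \circ \phi$ induces on $\pi_{4k+1}$ exactly the localization $\mb{Z} \to \mb{Z}_{(2)}$, which carries a generator to $1$. Once these are in hand, the rest of the argument is bookkeeping via naturality of $\mm{H}$ and needs no explicit computation.
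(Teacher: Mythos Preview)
Your proposal is correct and follows essentially the same route as the paper: the paper's proof is the one-line instruction ``apply the Hurewicz homomorphism on the map $S^{4k+1}\to \mm{B(\mathcal G/TOP)}\to \mm{B(\mathcal G/TOP)}[2]\to \mm{K}(\mb{Z}_{(2)},4k+1)$,'' and you have simply unpacked that instruction in detail, invoking Theorem~\ref{equilocal}(3) and the Hurewicz isomorphism on the Eilenberg--MacLane factor. Your particular choice of $l_{4k+1}$ as the Hurewicz image of a generator of $\pi_{4k+1}$ even forces $R=0$, which is a slightly cleaner normalization than the statement requires.
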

\begin{proof}
	The lemma follows by applying the Hurewicz homomorphism on the map $S^{4n+1}\to \mm{B(\mathcal G/TOP)}\to \mm{B(\mathcal G/TOP)}[2]\to \mm{K}(\mb{Z}_{(2)},4n+1)$.
\end{proof}

Summarizing Lemma \ref{SktoBGTOP} and \ref{homotopy4njia1}, we have
\begin{theorem}\label{homotopyhomology}
	A map $\iota: S^{n}\to \mm{B(\mathcal G/TOP)}$ is trivial if and only if 
	 $$\iota_\ast:H_n(S^n;G)\to H_n(\mm{B(\mathcal G/TOP)};G)$$
 is trivial where $G=\mb{Q}$ for $n=1\mod 4$, $\mb{Z}/2$ for $n=-1\mod 4$.
\end{theorem}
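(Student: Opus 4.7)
The plan is to deduce the theorem from Lemmas \ref{SktoBGTOP} and \ref{homotopy4njia1} by splitting the argument according to the residue of $n$ modulo $4$.

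If $n$ is even, then $\pi_n(\mm{B(\mathcal G/TOP)}) = 0$, so $\iota$ is automatically null-homotopic and both sides of the equivalence hold trivially (there is no hypothesis on homology left to check). If $n \equiv -1 \mod 4$, then Lemma \ref{SktoBGTOP} already asserts exactly the desired equivalence with coefficients $G = \mb{Z}/2$, so nothing further is needed.

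The only case with genuine content is $n = 4k+1$, where the theorem formally weakens the coefficient group from $\mb{Z}$ (as in Lemma \ref{SktoBGTOP}) to $\mb{Q}$. The direction ``$\iota$ trivial implies $\iota_\ast = 0$'' is immediate from functoriality of singular homology. For the converse, I would argue by contrapositive. Suppose $\iota \neq 0$; by Lemma \ref{homotopy4njia1}, $\mm{H}(\iota) = s\,l_{4k+1} + R$ with $s \neq 0$, and under the composite
$$H_{4k+1}(\mm{B(\mathcal G/TOP)};\mb{Z}) \stackrel{\phi_\ast}{\longrightarrow} H_{4k+1}(\mm{B(\mathcal G/TOP)}[2];\mb{Z}) \longrightarrow H_{4k+1}(\mm{K}(\mb{Z}_{(2)},4k+1);\mb{Z}) \cong \mb{Z}_{(2)}$$
the class $l_{4k+1}$ goes to a generator and $R$ goes to $0$. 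Now tensoring the entire composite with $\mb{Q}$ (which sends $\mb{Z}_{(2)}$ to $\mb{Q}$), the image of $\mm{H}(\iota)\otimes 1 = s[l_{4k+1}]_{\mb{Q}} + [R]_{\mb{Q}}$ lands at $s\cdot 1 \ne 0$ in $H_{4k+1}(\mm{K}(\mb{Q},4k+1);\mb{Q}) \cong \mb{Q}$. Since this rationalized map factors through $\iota_\ast : H_{4k+1}(S^{4k+1};\mb{Q}) \to H_{4k+1}(\mm{B(\mathcal G/TOP)};\mb{Q})$, the latter must also be nontrivial, which is the contrapositive.

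The main technical point is simply that tensoring with $\mb{Q}$ is exact and compatible with the $2$-localization map (because $\mb{Z}_{(2)} \otimes_{\mb{Z}} \mb{Q} = \mb{Q}$), so the conclusions of Lemma \ref{homotopy4njia1} carry over from $\mb{Z}_{(2)}$-coefficients to $\mb{Q}$-coefficients with no additional work. Everything else reduces to a direct application of the two preceding lemmas, so I expect no real obstacle beyond correctly bookkeeping the three residue classes.
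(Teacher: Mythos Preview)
Your proposal is correct and follows essentially the same approach as the paper, which merely states that the theorem follows by ``summarizing'' Lemmas~\ref{SktoBGTOP} and~\ref{homotopy4njia1}; you have simply spelled out the case analysis and the passage from $\mb{Z}_{(2)}$ to $\mb{Q}$ that the paper leaves implicit. One minor notational slip: when you tensor with $\mb{Q}$ you are changing coefficients, not the space, so the target should be written $H_{4k+1}(\mm{K}(\mb{Z}_{(2)},4k+1);\mb{Q})\cong\mb{Q}$ rather than $H_{4k+1}(\mm{K}(\mb{Q},4k+1);\mb{Q})$, though of course the two are canonically identified.
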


\section{The homomorphisms on homology groups}\label{Sechomoonhomology}
Recall the fibration sequence
$$\mm{\mathcal G/TOP}\to \mm{BSTOP}\stackrel{\mathcal F}{\to } \mm{BS\mathcal G}\stackrel{\gamma}{\to} \mm{B(\mathcal G/TOP)}$$ 
\begin{lemma}\label{liftlemma}
	For CW-complex $Y$, a map $f:Y\to \mm{BS\mathcal G}$ has a homotopy lift to $\mm{BSTOP}$ if and only if 
	$\gamma \circ f$ is homotopic to the constant map. 
\end{lemma}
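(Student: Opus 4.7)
The plan is to derive the lemma directly from the defining universal property of the homotopy fiber, applied to the fibration sequence
$$\mm{\mathcal G/TOP}\to \mm{BSTOP}\stackrel{\mathcal F}{\to}\mm{BS\mathcal G}\stackrel{\gamma}{\to}\mm{B(\mathcal G/TOP)}.$$
Since $\mm{\mathcal G/TOP}\simeq \Omega\mm{B(\mathcal G/TOP)}$, the map $\mathcal F$ is, up to weak equivalence, the inclusion of the homotopy fiber of $\gamma$. In particular the composition $\gamma\circ\mathcal F$ is nullhomotopic, and mapping into $\mm{BSTOP}$ is the same as choosing a map to $\mm{BS\mathcal G}$ together with a nullhomotopy of its composition with $\gamma$.

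For the ``only if'' direction I would assume a homotopy lift $\tilde f:Y\to\mm{BSTOP}$ with $\mathcal F\circ\tilde f\simeq f$, and then write
$$\gamma\circ f\simeq \gamma\circ\mathcal F\circ\tilde f,$$
which is nullhomotopic because $\gamma\circ\mathcal F$ already is.

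For the ``if'' direction I would replace $\mm{BSTOP}$ by the strict homotopy fiber
$$P_\gamma=\{(x,\omega)\in \mm{BS\mathcal G}\times \mm{B(\mathcal G/TOP)}^I:\omega(0)=*,\ \omega(1)=\gamma(x)\},$$
which is fiber-homotopy equivalent to $\mm{BSTOP}$ over $\mm{BS\mathcal G}$. Given a nullhomotopy $H:Y\times I\to\mm{B(\mathcal G/TOP)}$ with $H(-,0)=\gamma\circ f$ and $H(-,1)=*$, define $\tilde f:Y\to P_\gamma$ by $\tilde f(y)=\bigl(f(y),\,t\mapsto H(y,1-t)\bigr)$. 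Composing with projection to $\mm{BS\mathcal G}$ recovers $f$, so $\tilde f$ is the desired homotopy lift after transporting back along the equivalence $P_\gamma\simeq \mm{BSTOP}$.

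There is really no hard step here; the content is just the Puppe-sequence formalism together with the fact that $Y$ is a CW-complex so that the mapping-path replacement causes no issues. The only point that needs care is the identification of $\mathcal F:\mm{BSTOP}\to\mm{BS\mathcal G}$ with the homotopy fiber of $\gamma$, which is precisely what is meant by the displayed fibration sequence in the introduction and what is guaranteed by $\mm{\mathcal G/TOP}\simeq\Omega\mm{B(\mathcal G/TOP)}$.
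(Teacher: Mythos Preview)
Your proof is correct and, at bottom, rests on the same fact the paper uses: that $\mathcal F:\mm{BSTOP}\to\mm{BS\mathcal G}$ is identified with the homotopy fiber of $\gamma$, so that lifting $f$ through $\mathcal F$ is equivalent to nullhomotoping $\gamma\circ f$.

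The route, however, is different. You work directly with the standard path-space model $P_\gamma$ of the homotopy fiber and produce the lift by hand from a chosen nullhomotopy; the paper instead invokes the Dold--Lashof universal principal $\mathcal G/\mathrm{TOP}$-fibration $\mathcal U$ over $\mm{B(\mathcal G/TOP)}$, pulls it back along $\gamma$ to obtain $\bar{\mathfrak E}\to\mm{BS\mathcal G}$, and then argues that $\mm{BSTOP}$ is weakly equivalent to $\bar{\mathfrak E}$ via the homotopy-pullback property, finally using that $Y$ is a CW-complex so that $[Y,\mm{BSTOP}]\cong[Y,\bar{\mathfrak E}]$. Your argument is shorter and more elementary, relying only on the Puppe-sequence formalism already implicit in the displayed fibration sequence of the introduction. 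The paper's approach, by contrast, builds in the principal-fibration structure explicitly; this pays off later in Section~4, where the pulled-back principal fibration $E\to X$ and the map $\varphi:\mm{\mathcal G/TOP}\to E$ are used directly in the spectral-sequence computations. So the extra machinery is not gratuitous---it sets up the objects needed downstream---but for the lemma in isolation your approach is cleaner.
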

\begin{proof} 
Since $\mm{\mathcal G/TOP}$ is an infinite loop space \cite{BoVo}, it is an associative $H$-space. By \cite{DL}, there is a universal principle fibration $$\mathcal U= \{\mathfrak E,p,\mathfrak B,\mm{\mathcal G/TOP}\}$$ such that $\mathfrak B=\mm{B(\mathcal G/TOP)}$ and $\pi_i(\mathfrak E)=0$ for $i\ge 0$. 
 
Let the pull-back fibration of $\gamma$ be
$$\gamma^\ast \mathcal U= \{\mathfrak {\bar {E}},\bar p,\mm{ BS\mathcal G},\mm{\mathcal G/TOP}\}.$$
The diagram of the pull-back fibration is 
 a homotopy pull-back \cite{Math1976}. Since $\mm{BSTOP}$ is the homotopy fiber of the map $\gamma$, $\gamma \circ \mathcal F$ is homotopic to the constant map $c$. There is a map $\mathcal H: \mm{BSTOP} \to \mathfrak {\bar {E}}$ such that $\bar p\circ \mathcal H \simeq \mathcal F$ by the property of homotopy pull-back. 
\[
\xymatrix@C=0.8cm{
\mm{BSTOP}\ar@/^/[rrd]^c \ar@/_/[drd]_-{\mathcal F}\ar[dr]^-{\mathcal H}&&\\
 &\mathfrak {\bar {E}}\ar[d]^-{\bar p}\ar[r]^-{\bar \gamma} &\mathfrak E\ar[d]^-{p}\\
   &  \mm{ BS\mathcal G}\ar[r]^-{\gamma}&\mm{B(\mathcal G/TOP)}
}\]
By the exact sequences of homotopy groups of homotopy fibrations, $\mathcal H$ is a weak homotopy equivalence. 
\[
\xymatrix@C=0.8cm{
E\ar[r]\ar[d]^-{}&\mathfrak {\bar {E}}\ar[d]^-{}\ar[r]&\mathfrak E \ar[d]\\
Y  \ar[r]^-{f} &  \mm{BS\mathcal G}\ar[r]^-{\gamma}&\mm{B(\mathcal G/TOP)}
}
\] 

By the given map $f$, we have an induced principal fibration over $Y$. If $\gamma\circ f$ is homotopic to the constant map, then $E$ is homotopy equivalent to $\mm{\mathcal G/TOP}\times Y$. So $f$ has a lift to $\mathfrak {\bar {E}}$.

Note that $Y$ is a CW-complex. For a weak homotopy equivalence $w:\mathcal A\to\mathcal B$ between two spaces $\mathcal A,\mathcal B$, the induced map $[Y,\mathcal A]\to [Y,\mathcal B]$ is a one-to-one correspondence \cite{WhiteGW1978}. Then $f$ has a lift $\bar f$ to $\mm{BSTOP}$ such that $f\simeq \mathcal F \circ \bar f$.

Conversely, assume that $f$ has a lift $\bar f$ to $\mm{BSTOP}$ such that $f\simeq \mathcal F \circ \bar f$. Since $\gamma \circ \mathcal F$ is homotopic to the constant map, $\gamma \circ f$ is also.
\end{proof}

Let $X$ be a finite 1-connected CW complex.  Every stable spherical fibration over $X$ has a classifying map $f:X\to \mm{BS\mathcal G}$.
Let $X_n$ be the $n$-dimensional skelton of $X$, $f_n:X_n\to \mm{BS\mathcal G}$ be the restriction of $f$ on $X_n$. 
In this section, we assume that $X$ satisfies
\begin{equation}
	\text{$H_{2k}(X)$ is torsion-free for $k\ge 1$.} \label{torsionfree02}
\end{equation}

By (\ref{torsionfree02}) and simplest cell structure \cite{Wall1965}, $X_2=\vee_{i\in T^2} S_i^2$ where $T^2$ is a finite set or empty set. Since 
$$\pi_2(\mm{B(\mathcal G/TOP)})=\pi_1(\mm{\mathcal G/TOP})=0,$$ 
 $\gamma\circ f_2:X_2\to \mm{B(\mathcal G/TOP)}$ is homotopic to the constant map.

Assume $\gamma\circ f_n$ is homotopic to the constant map, i.e., $\gamma\circ f_n\stackrel{H}{\simeq} c$. Let $$X_{n+1}= X_n\cup_{\beta_i,i\in T^n} e^{n+1}$$ where $T^n$ is a finite set or empty set, $\beta_i$ is an attaching map. Since the inclusion $X_n\to X_{n+1}$ is a cofibration \cite{Hatcher}, we have a homotopy extension $\bar H$ such that $\bar H_0=\gamma\circ f_{n+1}$, $\bar H_1|_{X_n}=c$.
 \[
\xymatrix@C=0.8cm{
X_{n}\times I\cup X_{n+1}\times \{0\}\ar[d]\ar[rr]^-{H\cup \gamma\circ f_{n+1}}&&\mm{B(\mathcal G/TOP)}\\
X_{n+1}\times I\ar@{.>}[rru]_-{\bar{H}}& & 
}
\]

Thus we have the following commutative diagram up to homotopy where $f\circ \iota=f_{n+1}$. 
\[
\xymatrix@C=0.8cm{
X_{n+1}\ar[d]^-{\mathfrak p}\ar[r]^-{\iota}&X\ar[r]^-{f}&\mm{BS\mathcal G}\ar[d]^-{\gamma}\\
X_{n+1}/X_n=\vee_{i\in T^n} S_i^{n+1}\ar[rr]^-{\bar{\iota}}&  &   \mm{B(\mathcal G/TOP)}
}
\]

If $n$ is odd, $\bar{\iota}$ is homotopic to the constant map by $$\pi_{2i}(\mm{B(\mathcal G/TOP)})=\pi_{2i-1}(\mm{(\mathcal G/TOP)})=0.$$ Thus $\gamma\circ f_{n+1}$ is homotopic to the constant map. 

Next, we discuss the case for $n=2k$.
Recall the assumption (\ref{torsionfree02}), i.e, $H_{2k}(X)$ is torsion free for $k\ge 1$. By the simplest cell structure [\cite{Wall1965}, Proposition 4.1], we have
$$H_{2k}(X_{2k};\mb{Z})\cong  H_{2k}(X_{2k+1};\mb{Z})\cong  H_{2k}(X;\mb{Z}).$$ 
Thus, by the long exact sequence for the pair $(X_{2k+1},X_{2k})$,
\begin{equation}
	H_{2k+1}(X_{2k+1};\mb{Z})\cong  H_{2k+1}(X_{2k+1}/X_{2k};\mb{Z})=H_{2k+1}(\vee S^{2k+1};\mb{Z}) \label{X2k1andS2k1}
\end{equation}
By the above homotopy commutative diagram,
$$(\gamma\circ f_{2k+1})_\ast=(\bar{\iota}\circ \mathfrak p)_\ast:H_{2k+1}(X_{2k+1};G)\to H_{2k+1}(\mm{B(\mathcal G/TOP)};G)$$
for $G=\mb{Q}$ and $\mb{Z}/2$.  
If the homomorphisms
\begin{equation}
	(\gamma\circ f)_\ast:H_{4i+1}(X;\mb{Q})\to H_{4i+1}(\mm{B(\mathcal G/TOP)};\mb{Q}) \label{4njia1}
	\end{equation}
	\begin{equation}
	(\gamma\circ f)_\ast:H_{4i+3}(X;\mb{Z}/2)\to H_{4i+3}(\mm{B(\mathcal G/TOP)};\mb{Z}/2) \label{4njia3}
\end{equation}
 are trivial for any $i\ge 0$, then $\bar{\iota}$ is homotopic to the constant map by Theorem \ref{homotopyhomology}. Thus $\gamma\circ f_{n+1}$ is homotopic to the constant map.
By induction and Lemma \ref{liftlemma}, $f$ has a homotopy lift to $\mm{BSTOP}$.

In the next sections, we compute the homomorphisms (\ref{4njia1}) and (\ref{4njia3}).

\section{The $\mb{Z}/2$-homology homomorphism}\label{SecZ2homo}
In this section, the Serre spectral sequence \cite{Sw} is used frequently. Now we first give a fundamental proposition for it.
\begin{proposition}\cite{Sw}\label{edgehomo}
	For the Serre spectral sequence
	$$E_2^{p,q}=H_p(B;H_q(F;\mb{Z}/2))\Longrightarrow H_{p+q}(E;\mb{Z}/2)$$
	of the fibration $F\to E\to B$ where $B$ is connected, 
	\item (1) $E_\infty^{p,0}$ is a subgroup of $H_p(B;\mb{Z}/2)$ and equals to $$\mm{im}\{H_p(E;\mb{Z}/2)\to H_p(B;\mb{Z}/2)\};$$ 
	\item (2) $E_\infty^{0,q}$ is a quotient of $H_p(F;\mb{Z}/2)$ and isomorphic to $$\mm{im}\{H_p(F;\mb{Z}/2)\to H_p(E;\mb{Z}/2)\}.$$ 
\end{proposition}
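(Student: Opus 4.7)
The plan is to exploit the degenerate shape of the Serre spectral sequence near the two coordinate axes and then identify the resulting subquotients of $H_\ast(E;\mb{Z}/2)$ with the images of the two edge maps. Since $B$ is connected and a fiber $F$ over a connected base is path connected, we have $H_0(F;\mb{Z}/2)=\mb{Z}/2$ with trivial coefficient action, so $E_2^{p,0}=H_p(B;\mb{Z}/2)$; and $E_2^{0,q}=H_0(B;H_q(F;\mb{Z}/2))$ is a quotient of $H_q(F;\mb{Z}/2)$ (the module of coinvariants).

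For (1), I would examine the differentials $d_r\colon E_r^{p,q}\to E_r^{p-r,q+r-1}$ meeting the bottom edge $q=0$. Any incoming differential has source $E_r^{p+r,1-r}$, which vanishes for $r\ge 2$ because $1-r<0$ lies outside the first quadrant. Hence only the outgoing differential acts, and the transition from $E_r^{p,0}$ to $E_{r+1}^{p,0}$ is taking the kernel; so the $E_r^{p,0}$ form a descending chain of subgroups of $E_2^{p,0}=H_p(B;\mb{Z}/2)$, giving $E_\infty^{p,0}\hookrightarrow H_p(B;\mb{Z}/2)$. In the Serre filtration $0=F_{-1}\subset F_0\subset\cdots\subset F_p=H_p(E;\mb{Z}/2)$ the top quotient $F_p/F_{p-1}$ equals $E_\infty^{p,0}$, and the composite
$$H_p(E;\mb{Z}/2)\twoheadrightarrow F_p/F_{p-1}=E_\infty^{p,0}\hookrightarrow H_p(B;\mb{Z}/2)$$
coincides with the map induced by the projection $E\to B$. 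This yields $E_\infty^{p,0}=\mm{im}\{H_p(E;\mb{Z}/2)\to H_p(B;\mb{Z}/2)\}$.

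For (2) the argument is dual on the left edge $p=0$. The outgoing differential $d_r\colon E_r^{0,q}\to E_r^{-r,q+r-1}$ vanishes for $r\ge 2$, so only incoming differentials act and $E_{r+1}^{0,q}$ is a quotient of $E_r^{0,q}$. Therefore $E_\infty^{0,q}$ is a quotient of $E_2^{0,q}$, which is itself a quotient of $H_q(F;\mb{Z}/2)$. The bottom piece $F_0=E_\infty^{0,q}\subset H_q(E;\mb{Z}/2)$ fits into a factorization
$$H_q(F;\mb{Z}/2)\twoheadrightarrow E_\infty^{0,q}=F_0\hookrightarrow H_q(E;\mb{Z}/2)$$
of the map induced by the inclusion $F\hookrightarrow E$, giving the desired identification.

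The only technical point that needs care is the assertion that the two composites above really are induced by the projection $\pi\colon E\to B$ and the inclusion $i\colon F\hookrightarrow E$. This is handled by naturality: compare our spectral sequence with those of the trivial fibrations $B\stackrel{=}{\to}B$ and $F\to \mm{pt}$, whose edge maps are tautologically the respective identities, via the fibration maps $(F\to E\to B)\to (\mm{pt}\to B\to B)$ and $(F\to F\to\mm{pt})\to (F\to E\to B)$. I expect this naturality verification to be the only substantive step; the differential bookkeeping on the edges is immediate once the first-quadrant restriction is noted.
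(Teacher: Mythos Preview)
The paper does not give its own proof of this proposition: it is stated with a citation to Switzer \cite{Sw} and used as a black box. Your argument is precisely the standard edge-homomorphism argument one finds in that reference, so there is nothing to compare.

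One small slip: your justification that ``a fiber $F$ over a connected base is path connected'' is false in general (e.g.\ a nontrivial covering space), so it does not by itself give $H_0(F;\mb{Z}/2)=\mb{Z}/2$. The statement of the proposition implicitly assumes $F$ is connected (otherwise $E_2^{p,0}$ need not equal $H_p(B;\mb{Z}/2)$), and in every application in the paper the fiber is $\mathcal G/\mm{TOP}$, which is connected; you should simply state this hypothesis rather than derive it from connectedness of $B$. With that correction your proof is complete.
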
 

For the spectral sequence on cohomology, there also exists a dual version of the above proposition.

Let $X$ be a finite 1-connected CW complex,  
 $f:X\to \mm{BS}\mathcal G$ be a classifying map of a stable spherical fibration over $X$. Then we focus on the following homomorphism 
\begin{equation}
	(\gamma\circ f)_\ast:H_{2^{j+1}-1}(X;\mb{Z}/2)\to H_{2^{j+1}-1}(\mm{B(\mathcal G/TOP)};\mb{Z}/2)\label{gammaf}
\end{equation}

Let $\mb{K}=\Pi_{k=1}^\infty \mm{K}(\mb{Z}_{(2)},4k)\times \mm{K}(\mb{Z}/2,4k-2)$. Let $\mm{\mathcal G/TOP}[2]$ be the $2$-localization of $\mm{\mathcal G/TOP}$. 
It is well known that \cite{MaMi} $$\mm{\mathcal G/TOP}[2]\simeq \mb{K}.$$ 
Since localization preserves fibrations in the category of 1-connected spaces \cite{Sull2005}, we have the commutative diagram for fibrations.
\[
\xymatrix@C=0.8cm{
\mm{\mathcal G/TOP}\ar@{=}[r]\ar[d]^-{\varphi}&\mm{\mathcal G/TOP}\ar@{=}[r]\ar[d]^-{\mathfrak q}&\mm{\mathcal G/TOP}\ar[d]^-{}\ar[r]&\mm{\mathcal G/TOP}[2]\ar[d]\\
E\ar[r]\ar[d]^-{}&\bar{\mathfrak E}\ar[r]\ar[d]^-{}&\mathfrak E\simeq \ast \ar[d]^-{}\ar[r]&\simeq \ast\ar[d]\\
X\ar[r]^-{f}&\mm{BS\mathcal G}  \ar[r]^-{\gamma} &  \mm{B(\mathcal G/TOP)}\ar[r]&\mm{B(\mathcal G/TOP)}[2]
}
\]

The left fibration is the pull-back fibration $(\gamma\circ f)^\ast\mathcal U$ where $\mathcal U$ is the universal principle fibration $\mathcal U= \{\mathfrak E,p,\mm{B(\mathcal G/TOP)},\mm{\mathcal G/TOP}\}$.

From the last section, $\bar{\mathfrak E}$ is weakly homotopy equivalent to $\mm{BSTOP}$. Then we  regard the fibration $\mm{\mathcal G/TOP}\to \bar{\mathfrak E}\to \mm{BS\mathcal G}$ as 
$$\mm{\mathcal G/TOP}\stackrel{\mathfrak q}{\to} \mm{BSTOP}\to \mm{BS\mathcal G}$$

\begin{lemma}\label{varphi}
	If the homomorphism (\ref{gammaf}) is trivial for any $j\ge 1$, $\varphi^\ast:H^\ast(E;\mb{Z}/2)\to H^\ast(\mm{\mathcal G/TOP};\mb{Z}/2)$ is surjective.
\end{lemma}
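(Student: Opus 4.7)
My plan is to use the Serre spectral sequence of the principal fibration $\mm{\mathcal G/TOP}\to E\to X$ with $\mb{Z}/2$-coefficients. By the cohomology version of Proposition \ref{edgehomo}, $\varphi^\ast$ is surjective exactly when every class of the fiber $H^\ast(\mm{\mathcal G/TOP};\mb{Z}/2)$ is a permanent cycle, i.e.\ when every differential out of the column $E_r^{0,q}$ vanishes. Since $E$ is the pullback along $\gamma\circ f$ of the universal principal fibration $\mm{\mathcal G/TOP}\to \mathfrak E\to \mm{B(\mathcal G/TOP)}$ (with $\mathfrak E\simeq \ast$), naturality of the spectral sequence identifies each such differential with the pullback by $(\gamma\circ f)^\ast$ of the corresponding universal (path-loop) transgression.

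Next I would exploit the $2$-local splitting $\mm{\mathcal G/TOP}[2]\simeq \prod_{k\ge 1}\mm{K}(\mb{Z}_{(2)},4k)\times \mm{K}(\mb{Z}/2,4k-2)$, together with Serre's description of $H^\ast(\mm{K}(\pi,n);\mb{Z}/2)$ as a polynomial algebra on admissible Steenrod monomials applied to the fundamental class, and Kudo's transgression theorem, to identify the transgression of $\mm{Sq}^I\iota$ with $\mm{Sq}^I$ applied to the corresponding fundamental class of $\mm{B(\mathcal G/TOP)}[2]$. Naturality of Steenrod operations under $(\gamma\circ f)^\ast$ then reduces the problem to showing that $(\gamma\circ f)^\ast$ annihilates each fundamental class $\iota_{4k-1}\in H^{4k-1}(\mm{B(\mathcal G/TOP)};\mb{Z}/2)$ and each mod-$2$ reduction $\bar\iota_{4k+1}\in H^{4k+1}(\mm{B(\mathcal G/TOP)};\mb{Z}/2)$ for every $k\ge 1$.

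The hypothesis, combined with the universal coefficient theorem over the field $\mb{Z}/2$, immediately yields the vanishing for $\iota_{4k-1}$ whenever $4k-1=2^{j+1}-1$, i.e.\ when $k$ is a power of $2$. The main obstacle will be to extend this vanishing to the remaining classes, namely $\iota_{4k-1}$ with $k$ not a power of $2$ and the $\bar\iota_{4k+1}$ in degrees $\equiv 1\pmod 4$. To handle these I would factor $\gamma\circ f$ through $\gamma:\mm{BS\mathcal G}\to \mm{B(\mathcal G/TOP)}$ and appeal to structural properties of $H^\ast(\mm{BS\mathcal G};\mb{Z}/2)$ as a module over the Steenrod algebra, aiming to express each remaining fundamental class as a Steenrod operation on a class of dimension $2^{j+1}-1$; the $\bar\iota_{4k+1}$ case should additionally draw on the companion rational analysis from Section \ref{SecQhomo} together with the torsion-freeness of $H_{2n}(X;\mb{Z})$ to transfer the vanishing from $\mb{Q}$- to $\mb{Z}/2$-cohomology.
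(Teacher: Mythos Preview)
Your overall strategy---working in the cohomology Serre spectral sequence, using naturality from the contractible universal fibration to identify each transgression $d_{2k+1}(\ell_{2k}^\ast)$ with $(\gamma\circ f)^\ast(\iota_{2k+1})$, and then invoking Kudo's theorem and multiplicativity to reduce surjectivity of $\varphi^\ast$ to the fundamental classes---is sound and in fact tidier than the paper's mixture of homology and cohomology arguments. Your treatment of the classes $\bar\iota_{4k+1}$ via the rational acyclicity of $\mm{BS\mathcal G}$ together with the torsion-freeness of $H^{4k+1}(X;\mb{Z})$ (a consequence of~(\ref{torsionfree02})) also works; note however that the paper proves Lemma~\ref{varphi} \emph{without} assuming~(\ref{torsionfree02}), so on this point your route yields a slightly weaker statement (still adequate for Theorem~\ref{2nfree}).

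The real gap is your handling of $\iota_{4m-1}$ when $m$ is not a power of~$2$. You propose to ``express each remaining fundamental class as a Steenrod operation on a class of dimension $2^{j+1}-1$'' inside $H^\ast(\mm{BS\mathcal G};\mb{Z}/2)$, but this is not a formal consequence of the $\mathcal A_2$-module structure, and you give no reference or argument for it. In $H^\ast(\mm{B(\mathcal G/TOP)};\mb{Z}/2)$ the various $\iota_{4m-1}$ live in independent Eilenberg--MacLane factors and are certainly not related by Steenrod operations; any relation of the kind you need would have to appear only after pulling back along $\gamma$, and establishing that is a nontrivial theorem about $\mm{BS\mathcal G}$, not a routine observation. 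The paper sidesteps this entirely by invoking the theorem of Brumfiel--Madsen--Milgram \cite{BMM1971,BMM1973}: the map $\mathfrak q_\ast\colon H_\ast(\mm{\mathcal G/TOP};\mb{Z}/2)\to H_\ast(\mm{BSTOP};\mb{Z}/2)$ is injective on the summand $H_\ast(\mb{K}_2;\mb{Z}/2)$, i.e.\ on every Eilenberg--MacLane factor except the $K(\mb{Z}/2,2^{j}-2)$. Since $\varphi$ factors through $\mathfrak q$ via the map $E\to\mm{BSTOP}$, this immediately gives $\varphi_\ast(\ell_{2k})\ne 0$---equivalently $\ell_{2k}^\ast\in\mm{im}\,\varphi^\ast$---for all such $k$, with no transgression computation at all. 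The spectral-sequence argument using the hypothesis~(\ref{gammaf}) is then needed only for the remaining classes $\ell_{2^{j+1}-2}$, which is precisely where the hypothesis bites. Without the Brumfiel--Madsen--Milgram input (or an equivalent structural result that you would have to supply), your plan does not close.
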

\begin{proof}
By the $2$-localization, there is a map $\mm{\mathcal G/TOP}\to \mb{K}$ inducing an isomorphism $H_\ast(\mm{\mathcal G/TOP};\mb{Z}/2)\cong H_\ast(\mb{K};\mb{Z}/2)$.
 Let $\mb{K}=\mb{K}_1\times \mb{K}_2$ 
$$\mb{K}_1=\Pi_{j\ge 2} \mm{K}(\mb{Z}/2,2^j-2)$$
$$\mb{K}_2=\Pi_{k\ge 1} \mm{K}(\mb{Z}_{(2)},4k)\times \Pi_{k\ne 2^j}\mm{K}(\mb{Z}/2,4k-2)$$
Let ${\ell}_{2k}\in H_{2k}(\mm{\mathcal G/TOP};\mb{Z}/2)$ correspond to the fundamental class  
$$\text{$\iota_{2k}\in H_{2k}(\mm{K}(\mb{Z}/2,2k);\mb{Z}/2)$ or  $H_{2k}(\mm{K}(\mb{Z}_{(2)},2k);\mb{Z}/2)$}$$

By \cite{BMM1971,BMM1973}, it is injective that the restriction on $H_\ast(\mb{K}_2;\mb{Z}/2)$ of the homomorphism $\mathfrak q_\ast:H_\ast(\mm{\mathcal G/TOP};\mb{Z}/2)\to H_\ast(\mm{BSTOP};\mb{Z}/2)$. 
By the following commutative diagram 
\[
\xymatrix@C=0.8cm{
\mm{\mathcal G/TOP}\ar@{=}[r]\ar[d]^-{\varphi}&\mm{\mathcal G/TOP}\ar[d]^{q}\\
E \ar[r]&\mm{BSTOP}
}
\]
 it is also injective that the restriction on $H_\ast(\mb{K}_2;\mb{Z}/2)$ of the homomorphism $\varphi_\ast:H_\ast(\mm{\mathcal G/TOP};\mb{Z}/2)\to H_\ast(E;\mb{Z}/2)$.

 Next, we consider the Serre spectral sequences.
 
 The natural projection induces a morphism between the following spectral sequences
\begin{equation}
	E_2^{p,q}=H_p(\mm{B(\mathcal G/TOP)};H_q(\mm{\mathcal G/TOP};\mb{Z}/2))\Longrightarrow H_{p+q}(\ast;\mb{Z}/2) \label{unverfib}
\end{equation}
$$\bar{E}_2^{p,q}=H_p(\mm{K}(\mb{Z}/2,2^{j+1}-1) ;H_q(\mm{K}(\mb{Z}/2,2^{j+1}-2);\mb{Z}/2))\Longrightarrow H_{p+q}(\ast;\mb{Z}/2)$$
By the naturality, ${\ell}_{2^{j+1}-2}\in E_{2^{j+1}-1}^{0,2^{j+1}-2}$. 
 
Thus, in the Serre spectral sequence
\begin{equation}
\mb{E}_2^{p,q}=H_p(X;H_q(\mm{\mathcal G/TOP};\mb{Z}/2))\Longrightarrow H_{p+q}(E;\mb{Z}/2) \label{Xfib}	
\end{equation}
we first have that $${\ell}_{2^{j+1}-2}\in \mb{E}_{2^{j+1}-1}^{0,2^{j+1}-2}$$ by the naturality for the Serre spectral sequences. Furthermore, the nontrivial differential for ${\ell}_{2^{j+1}-2}$ may be
$$\mm{d}_{2^{j+1}-1}:\mb{E}_{2^{j+1}-1}^{2^{j+1}-1,0}\to \mb{E}_{2^{j+1}-1}^{0,2^{j+1}-2}.$$

Assume there exists an element $x\in \mb{E}_{2^{j+1}-1}^{2^{j+1}-1,0}$ such that 
$$\mm{d}_{2^{j+1}-1}(x)={\ell}_{2^{j+1}-2}$$ in the spectral sequence (\ref{Xfib}). By the naturality, 
\begin{equation}
	{d}_{2^{j+1}-1}((\gamma\circ f)_\ast x)={\ell}_{2^{j+1}-2} \in {E}_{2^{j+1}-1}^{0,2^{j+1}-2}\label{contradiction}
\end{equation}
in the spectral sequences (\ref{unverfib}).
Since the homomorphism (\ref{gammaf}) 
  is trivial,
   $(\gamma\circ f)_\ast:\mb{E}_{2^{j+1}-1}^{2^{j+1}-1,0}\to {E}_{2^{j+1}-1}^{2^{j+1}-1,0}$ is trivial which contradicts with the equation (\ref{contradiction}). Hence $${\ell}_{2^{j+1}-2} \in \mb{E}_{2^{j+1}}^{0,2^{j+1}-2}\cong \mb{E}_{\infty}^{0,2^{j+1}-2}$$ which means that $\varphi_\ast({\ell}_{2^{j+1}-2})\ne 0$ by Proposition \ref{edgehomo}.
   
   Let ${\ell}_{2k}^\ast\in H^{2k}(\mm{\mathcal G/TOP};\mb{Z}/2)$ be the image of the fundamental class  
$$\text{$\iota_{2k}^\ast\in H^{2k}(\mm{K}(\mb{Z}/2,2k);\mb{Z}/2)$ or  $H^{2k}(\mm{K}(\mb{Z}_{(2)},2k);\mb{Z}/2)$}$$
under the isomorphism $H^\ast(\mb{K};\mb{Z}/2)\to H^\ast(\mm{\mathcal G/TOP};\mb{Z}/2)$. Subsequently, ${\ell}_{2k}^\ast$ is a dual of ${\ell}_{2k}$. 
 
 Since $0\ne \varphi_\ast({\ell}_{2k})=e_{2k}\in H_{2k}(E;\mb{Z}/2)$, there exists a dual $e_{2k}^\ast\in H^{2k}(E;\mb{Z}/2)$ of $e_{2k}$ such that $$\varphi^\ast(e_{2k}^\ast)={\ell}_{2k}^\ast+\mathfrak X_{2k}\in H^\ast(\mm{\mathcal G/TOP};\mb{Z}/2)$$ where $\mathfrak X_{2k}\in H^{2k}(\Pi_{2j<k}\mm{K}(\mb{Z}_{(2)},4j)\times \Pi_{2j<k-1}\mm{K}(\mb{Z}/{2},4j+2);\mb{Z}/2)$.
In particular, $\varphi^\ast(e_{2}^\ast)={\ell}_{2}^\ast$.

Recall that $H^\ast(\mm{K}(\mathbb{Z}_{(2)},2k);\mathbb{Z}/2)$ and $H^\ast(\mm{K}(\mathbb{Z}/{2},2k);\mathbb{Z}/2)$ are polynomial rings with generators ${\ell}_{2k}$ and $\mathrm{Sq}^\mm{I} {\ell}_{2k}$, where $\mm{I}=(i_1,i_2,\cdots,i_r)$ is a admissible sequence \cite{Sw} satisfying
$i_1-i_2-\cdots -i_r < 2k$, $i_r>1$ for the case of $\mm{K}(\mathbb{Z}_{(2)},2k)$, $i_r>0$ for the case of $\mm{K}(\mathbb{Z}/{2},2k)$. 
Note $$\varphi^\ast:H^\ast(E;\mb{Z}/2)\to H^\ast(\mm{\mathcal G/TOP};\mb{Z}/2)$$
is an algebra homomorphism over the mod $2$ Steenrod algebra. By $\varphi^\ast(e_{2}^\ast)={\ell}_{2}^\ast$, we have $\varphi^\ast(\mm{Sq}^1e_{2}^\ast)=\mm{Sq}^1{\ell}_{2}^\ast$. Hence the image of $\varphi^\ast$ contains $H^\ast(\mm{K}(\mathbb{Z}/{2},2);\mathbb{Z}/2)$. Note that $\varphi^\ast(e_{4}^\ast)={\ell}_{4}^\ast+\mathfrak X_4$ and $\mathfrak X_4\in H^4(\mm{K}(\mathbb{Z}/{2},2);\mathbb{Z}/2)$. Thus, there exists $\bar{e}_{4}^\ast\in H^4(E;\mb{Z}/2)$ such that $\varphi^\ast(\bar{e}_{4}^\ast)={\ell}_{4}^\ast$. 

By induction, there exists $\bar{e}_{2k}^\ast\in H^{2k}(E;\mb{Z}/2)$ such that $\varphi^\ast(\bar{e}_{2k}^\ast)={\ell}_{2k}^\ast$ for $k\ge 1$. Then
the desired epimorphism follows from the algebra homomorphism $\varphi^\ast$ over the mod $2$ Steenrod algebra.     
\end{proof}

\begin{proposition}\label{EinftyE2}
	If the homomorphism (\ref{gammaf}) is trivial for any $j\ge 1$, $E_\infty^{p,q}\cong E_2^{p,q}$ in the Serre spectral sequence for the pull-back fibration
	$$\mm{\mathcal G/TOP}\stackrel{\varphi}{\to} E\to X$$
	$$E_2^{p,q}=H^p(X;H^q(\mm{\mathcal G/TOP};\mb{Z}/2))\Longrightarrow H^{p+q}(E;\mb{Z}/2)$$
\end{proposition}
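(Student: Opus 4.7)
The plan is to deduce this collapse result from the previous Lemma \ref{varphi} by a standard Leray--Hirsch style argument. The essential input is that $\varphi^\ast:H^\ast(E;\mb{Z}/2)\to H^\ast(\mm{\mathcal G/TOP};\mb{Z}/2)$ is surjective, which has already been established. I would combine this with the multiplicative structure of the Serre spectral sequence and the fact that, over the field $\mb{Z}/2$ with simply-connected base $X$, the $E_2$ page is the tensor product $H^p(X;\mb{Z}/2)\otimes H^q(\mm{\mathcal G/TOP};\mb{Z}/2)$ of algebras.

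First I would handle the fiber axis. The edge homomorphism $H^q(E;\mb{Z}/2)\to E_\infty^{0,q}\hookrightarrow E_2^{0,q}=H^q(\mm{\mathcal G/TOP};\mb{Z}/2)$ is precisely $\varphi^\ast$ (by the cohomological dual of Proposition \ref{edgehomo}). Since Lemma \ref{varphi} says this is surjective, the inclusion $E_\infty^{0,q}\hookrightarrow E_2^{0,q}$ is an equality, which forces every outgoing differential $d_r:E_r^{0,q}\to E_r^{r,q-r+1}$ to vanish for all $r\ge 2$. Next I would observe that differentials out of the base axis $E_r^{p,0}$ automatically vanish for trivial reasons: the target $E_r^{p+r,1-r}$ lies in negative fiber degree for $r\ge 2$ and is therefore zero. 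So every class on the base axis and every class on the fiber axis is a permanent cocycle.

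Finally I would invoke the multiplicativity of the Serre spectral sequence. Because the coefficients form a field, the $E_2$ page is generated as an algebra by $E_2^{p,0}\cup E_2^{0,q}$. A general class $x\otimes a\in E_2^{p,q}$ is a product of a base-axis class $x$ and a fiber-axis class $a$, and the Leibniz rule gives $d_r(x\cdot a)=d_r(x)\cdot a\pm x\cdot d_r(a)=0$ at $r=2$. Thus $E_3=E_2$, and the same argument repeats inductively to show $E_r=E_2$ for every $r\ge 2$, yielding $E_\infty^{p,q}\cong E_2^{p,q}$.

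I do not expect any serious obstacle; the only subtlety worth spelling out is that the identification of the edge map with $\varphi^\ast$ does require the cohomological version of Proposition \ref{edgehomo}, which the paper mentions only in passing, and that the Leibniz argument needs $E_2$ to be generated by the two axes, which is where working over the field $\mb{Z}/2$ (together with $\pi_1(X)=0$) is essential.
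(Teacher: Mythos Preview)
Your proposal is correct and follows essentially the same route as the paper's own proof: both invoke Lemma~\ref{varphi} and the cohomological dual of Proposition~\ref{edgehomo} to get $E_\infty^{0,q}=E_2^{0,q}$, then use the tensor decomposition $E_2^{p,q}=E_2^{p,0}\otimes E_2^{0,q}$ over $\mb{Z}/2$ together with the Leibniz rule and induction on $r$ to kill all differentials. Your write-up is slightly more explicit about why $d_r$ vanishes on the base axis, but the argument is the same.
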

\begin{proof}
	By Lemma \ref{varphi}, $\varphi^\ast:H^{\ast}(E;\mb{Z}/2)\to H^\ast(\mm{\mathcal G/TOP};\mb{Z}/2)$ is an epimorphism when the homomorphism (\ref{gammaf}) is trivial for any $j\ge 1$. By the dual version of Proposition \ref{edgehomo}, $E_\infty^{0,q}=E_2^{0,q}$ for $q\ge 0$.
	
	Note $E_2^{p,q}=E_2^{p,0}\otimes E_2^{0,q}$ and $d_2(E_2^{0,q})=0$. $d_2(E_2^{p,q})=0$ for $p,q\ge 0$ by the formula \cite{Sw} 
	$$d_r(x\otimes y)=d_r(x)\otimes y+(-1)^{\mm{deg}(x)}x\otimes d_r(y)$$  for $r\ge 2$. Thus $E_3^{p,q}=E_3^{p,0}\otimes E_3^{0,q}=E_2^{p,0}\otimes E_2^{0,q}$. 
	
	By induction, $E_\infty^{p,q}\cong E_2^{p,q}$ for $p,q\ge 0$.
\end{proof}

From the above proposition, we have

\begin{theorem}\label{mod2homo}
	If the homomorphism (\ref{gammaf}) is trivial for $j\ge 1$, then	$(\gamma\circ f)_\ast:H_{2k}(X;\mb{Z}/2)\to H_{2k}(\mm{B(\mathcal G/TOP)};\mb{Z}/2)$ is trivial for $k\ge 1$.
\end{theorem}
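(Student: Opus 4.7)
The plan is to exploit Proposition \ref{EinftyE2} via naturality of the Serre spectral sequence, comparing the pullback fibration $\mm{\mathcal G/TOP}\to E\to X$ with the universal principal fibration $\mm{\mathcal G/TOP}\to \mathfrak{E}\simeq \ast\to \mm{B(\mathcal G/TOP)}$. This produces vanishing of $(\gamma\circ f)^\ast$ on the odd-degree fundamental classes of $H^\ast(\mm{B(\mathcal G/TOP)};\mb{Z}/2)$, and then the algebra-over-Steenrod structure of this cohomology, supplied by the $2$-localization, propagates that vanishing to all of $H^{>0}$.

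In detail, I would first observe that since $\mathfrak{E}$ is contractible, every fundamental class $\ell_{2k}^\ast\in E_2^{0,2k}$ is transgressive in the mod-$2$ universal spectral sequence. Applying the $2$-localization
$$\phi:\mm{B(\mathcal G/TOP)}\to \mm{B(\mathcal G/TOP)}[2]\simeq \Pi_{k\ge 1}\mm{K}(\mb{Z}_{(2)},4k+1)\times \mm{K}(\mb{Z}/2,4k-1),$$
which is a mod-$2$ cohomology isomorphism by Theorem \ref{equilocal}, and using the corresponding product decomposition into path-loop fibrations on Eilenberg--MacLane factors, the transgression is identified as $\tau(\ell_{2k}^\ast)=\phi^\ast(\iota_{2k+1}^\ast)$, with $\iota_{2k+1}^\ast$ the fundamental class of the relevant $\mm{K}(\pi,2k+1)$ factor. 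By naturality of the spectral sequence under the morphism of fibrations induced by $\gamma\circ f$, the differential on the pullback side satisfies $d_{2k+1}(\ell_{2k}^\ast)=(\phi\circ \gamma\circ f)^\ast(\iota_{2k+1}^\ast)$, and Proposition \ref{EinftyE2} forces it to vanish, so $(\phi\circ \gamma\circ f)^\ast(\iota_{2k+1}^\ast)=0$ for every $k\ge 0$.

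To finish, I would invoke that $H^\ast(\mm{B(\mathcal G/TOP)}[2];\mb{Z}/2)$ is a polynomial algebra on admissible Steenrod monomials $\mm{Sq}^{\mm I}\iota_n^\ast$ applied to the odd-degree fundamental classes. Since $(\phi\circ \gamma\circ f)^\ast$ is a ring homomorphism commuting with the Steenrod action and annihilates every $\iota_n^\ast$, it kills every such monomial and every product, hence vanishes on $H^{>0}(\mm{B(\mathcal G/TOP)}[2];\mb{Z}/2)$. Because $\phi^\ast$ is a mod-$2$ isomorphism, this gives vanishing of $(\gamma\circ f)^\ast$ on $H^{>0}(\mm{B(\mathcal G/TOP)};\mb{Z}/2)$, in particular on $H^{2k}$ for $k\ge 1$; dualizing yields the theorem. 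The main technical point will be the transgression identification $\tau(\ell_{2k}^\ast)=\phi^\ast(\iota_{2k+1}^\ast)$ itself: verifying it requires passing through the $2$-localization square and checking compatibility of transgressions, which reduces, via Theorem \ref{equilocal}, to the standard path-loop transgression for Eilenberg--MacLane spaces.
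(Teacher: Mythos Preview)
Your argument is correct, but it takes a longer and more explicit route than the paper's. The paper argues by contradiction in a single stroke: if some $b\in H^{>0}(\mm{B(\mathcal G/TOP)};\mb{Z}/2)$ had $(\gamma\circ f)^\ast(b)\ne 0$, then since the universal total space $\mathfrak E$ is contractible, $b$ is killed by some $d_r$ in the universal spectral sequence; by naturality $(\gamma\circ f)^\ast(b)$ is then killed by $d_r$ in the pullback spectral sequence, contradicting the degeneration $E_\infty^{p,0}=E_2^{p,0}$ from Proposition~\ref{EinftyE2}. No identification of the transgression, no Eilenberg--MacLane decomposition, and no Steenrod-algebra propagation are needed: contractibility of $\mathfrak E$ already forces \emph{every} base class to die, and degeneration on the pullback side blocks that.

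Your approach instead pins down exactly which differentials do the killing (the transgressions $\tau(\ell_{2k}^\ast)=\phi^\ast(\iota_{2k+1}^\ast)$, read off from the product-of-path-loop-fibrations description of the $2$-localized universal fibration), deduces that the odd-degree fundamental classes pull back to zero, and then uses the polynomial/Steenrod structure of $H^\ast(\prod K(\pi,n);\mb{Z}/2)$ to extend this to all of $H^{>0}$. This is valid and yields a bit more explicit information about \emph{why} classes vanish, but it costs you the extra verification of the transgression formula and the algebra-over-$\mathcal A_2$ argument, which the paper's one-line contractibility-plus-naturality trick bypasses entirely.
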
 
\begin{proof}
	Assume $(\gamma\circ f)_\ast:H_{2k}(X;\mb{Z}/2)\to H_{2k}(\mm{B(\mathcal G/TOP)};\mb{Z}/2)$ is nontrivial for some $k$. Consider the dual of $(\gamma\circ f)_\ast$. There exists an element $b\in H^{2k}(\mm{B(\mathcal G/TOP)};\mb{Z}/2)$ such that $$(\gamma\circ f)^\ast(b)\ne 0\in H^{2k}(X;\mb{Z}/2).$$
	
	In the spectral sequence
$$	\mb{E}_2^{p,q}=H^p(\mm{B(\mathcal G/TOP)};H^q(\mm{\mathcal G/TOP};\mb{Z}/2))\Longrightarrow H^{p+q}(\ast;\mb{Z}/2)$$
for some $r$, there exists a nontrivial differential 
$$d_r:\mb{E}_r^{2k-r,r-1}\to \mb{E}_r^{2k,0}$$
such that $b\in \mb{E}_r^{2k,0}$ and $b\in \mm{im}(d_r)$, i.e., $b=0\in \mb{E}_{r+1}^{2k,0}$. 

By the naturality, $(\gamma\circ f)^\ast(b)=0\in {E}_{r+1}^{2k,0}$  in the spectral sequence
$${E}_2^{p,q}=H^p(X;H^q(\mm{\mathcal G/TOP};\mb{Z}/2))\Longrightarrow H^{p+q}(E;\mb{Z}/2)$$
which contradicts with ${E}_{\infty}^{2k,0}={E}_{2}^{2k,0}$ (Proposition \ref{EinftyE2}).
\end{proof}

\section{The $\mb{Q}$-homology homomorphism}\label{SecQhomo}
 In this section, $X$ is a finite 1-connected CW-complex and satisfies the torsion-free condition (\ref{torsionfree02}). Let $f:X\to \mm{BS}\mathcal G$ be a classifying map of a stable spherical fibration over $X$.
 
 Consider the homomorphism of $\mb{Q}$-coefficient homology groups
$$H_{4k+1}(X;\mb{Q})\stackrel{f_\ast}{\to }H_{4k+1}(\mm{BS}\mathcal G;\mb{Q})\stackrel{\gamma_\ast}{\to }H_{4k+1}(\mm{B(\mathcal G/TOP)};\mb{Q})$$
Recall that $\pi_{n}\mm{BS\mathcal G}$ is a finite  group for $n\ge 0$ (see Example \ref{ex}). By the exact sequence
$$ \pi_{n+1}(\mm{BS\mathcal G})\to \pi_{n}(\mm{\mathcal G/TOP})\stackrel{\mathfrak q_\ast}{\to} \pi_{n}(\mm{BSTOP})\to \pi_n(\mm{BS\mathcal G})$$
 $\mathfrak q: \mm{\mathcal G/TOP}\to \mm{BSTOP}$ 
 defines a rational equivalence \cite{FHT}, i.e. 
 $$\mathfrak q_\ast\otimes \mb{Q} :\pi_\ast(\mm{\mathcal G/TOP})\otimes\mb{Q}\to \pi_\ast(\mm{BSTOP})\otimes\mb{Q}$$ 
is isomorphic. Hence $\mathfrak q_\ast :H_\ast(\mm{\mathcal G/TOP};\mb{Q})\to H_\ast(\mm{BSTOP};\mb{Q})$ 
is isomorphic. Furthermore, $H_n(\mm{BS\mathcal G};\mb{Q})=0$ for any $n\ge 1$ by the $\mb{Q}$-coefficient version of Proposition \ref{edgehomo}. Thus we have
\begin{theorem}\label{Qcoefficienthomo}
	For any map $f:X\to \mm{BS}\mathcal G$, the homomorphism 
	$$H_{n}(X;\mb{Q})\stackrel{f_\ast}{\to }H_{n}(\mm{BS}\mathcal G;\mb{Q})\stackrel{\gamma_\ast}{\to }H_{n}(\mm{B(\mathcal G/TOP)};\mb{Q})$$ 
	is trivial for $n>0$.
\end{theorem}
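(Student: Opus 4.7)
The plan is to observe that the composite $\gamma_* \circ f_*$ factors through the intermediate group $H_n(\mm{BS}\mathcal{G};\mb{Q})$, and then to show that this intermediate group already vanishes for every $n \ge 1$. This kills the composite uniformly in the map $f$, so no hypothesis on $f$ is required at all.

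The vanishing $H_n(\mm{BS}\mathcal{G};\mb{Q}) = 0$ for $n \ge 1$ I would deduce directly from Serre's finiteness theorem. Recall from Example \ref{ex} that $\pi_n(\mm{BS}\mathcal{G}) \cong \pi_{n-1}^s$ is finite for each $n \ge 1$. Consequently $\pi_n(\mm{BS}\mathcal{G}) \otimes \mb{Q} = 0$ for all $n$, so the rationalization $(\mm{BS}\mathcal{G})_{\mb{Q}}$ (which exists by Theorem \ref{localexist}, since $\mm{BS}\mathcal{G}$ is simply connected) has trivial rational homotopy groups in every dimension and is therefore weakly contractible. By the equivalence $(1)\Leftrightarrow (2)$ of Theorem \ref{equilocal} applied to the localization map, integral homology is localized, so
\[
H_n(\mm{BS}\mathcal{G};\mb{Z}) \otimes \mb{Q} \;\cong\; \tilde H_n((\mm{BS}\mathcal{G})_{\mb{Q}};\mb{Z}) \;=\; 0 \qquad (n \ge 1),
\]
and a universal-coefficient argument yields $H_n(\mm{BS}\mathcal{G};\mb{Q}) = 0$.

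I do not anticipate any substantive obstacle here: the whole argument chains together two standard inputs, namely the finiteness of the stable stems and the localization dictionary of Section \ref{Seclocalhomo}. If one prefers to follow the author's hint about a $\mb{Q}$-coefficient analogue of Proposition \ref{edgehomo}, an alternative route is to run the rational Serre spectral sequence of the fibration $\mm{\mathcal{G}/TOP} \stackrel{\mathfrak{q}}{\to} \mm{BSTOP} \to \mm{BS}\mathcal{G}$: the established fact that $\mathfrak{q}_*$ is a rational isomorphism, combined with the edge-homomorphism description, forces $E_\infty^{p,0} = 0$ for $p \ge 1$, hence $H_p(\mm{BS}\mathcal{G};\mb{Q}) = 0$. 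Either way the conclusion is immediate.
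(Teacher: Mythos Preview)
Your proposal is correct, and in fact your \emph{alternative} route---running the rational Serre spectral sequence of $\mm{\mathcal G/TOP}\stackrel{\mathfrak q}{\to}\mm{BSTOP}\to\mm{BS\mathcal G}$ and invoking the edge-homomorphism description once $\mathfrak q_\ast$ is known to be a rational isomorphism---is precisely the paper's argument (the paper establishes the rational isomorphism of $\mathfrak q_\ast$ in the paragraph preceding the theorem and then cites the $\mb{Q}$-coefficient version of Proposition \ref{edgehomo}).

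Your \emph{primary} route is a genuinely more direct variant: rather than passing through the auxiliary fibration and its spectral sequence, you rationalize $\mm{BS\mathcal G}$ itself and use Theorems \ref{localexist} and \ref{equilocal} to conclude $H_n(\mm{BS\mathcal G};\mb{Q})=0$ straight from the finiteness of $\pi_n(\mm{BS\mathcal G})\cong\pi_{n-1}^s$. This avoids any spectral-sequence bookkeeping and makes transparent that the only input is Serre's finiteness theorem. The paper's approach, by contrast, reuses the fibration already in play in Section \ref{SecZ2homo} and the rational equivalence $\mathfrak q$, which fits the narrative flow but is slightly less self-contained. One small remark on your sketch of the alternative route: the implication ``$E_\infty^{p,0}=0 \Rightarrow H_p(\mm{BS\mathcal G};\mb{Q})=0$'' requires a short induction (a nonzero $E_2^{p,0}$ in lowest positive degree $p$ could only be killed by $d_p$ landing in $E_p^{0,p-1}$, which would contradict $E_\infty^{0,p-1}=E_2^{0,p-1}$); the paper is equally terse here, so this is not a defect of your write-up.
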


Now we present the proof of Theorem \ref{2nfree}:
 By Theorem \ref{mod2homo} and \ref{Qcoefficienthomo}, the homomorphisms (\ref{4njia1}) and (\ref{4njia3}) are trivial if the homomorphism
 $$(\gamma\circ f)_\ast:H_{2^{j+1}-1}(X;\mb{Z}/2)\to H_{2^{j+1}-1}(\mm{B(\mathcal G/TOP)};\mb{Z}/2)$$
is trivial for $j>0$. From the argument at the end of Section \ref{Sechomoonhomology}, Theorem \ref{2nfree} follows.

\section{Homeomorphism types and homotopy types}\label{SecSurgery} 

From \cite{Sull1966}, we have a one-to-one correspondence 
$$\mm{hT}(M)\to [M-\{pt\},\mm{\mathcal G/PL}]$$
$M$ is a closed piecewise linear (PL) manifold \cite{RoSa}, $\mm{\mathcal G/PL}$ is the homotopy fiber of the natural map
$\mm{BPL}\to \mm{B\mathcal G}$ where $\mm{BPL}$ is the classifying space of stable piecewise linear bundles. Let $\mm{HT}(M)$ be the set of PL manifolds homotopy equivalent to $M$. $M_1\in \mm{HT}(M)$ is concordant to $M_2\in \mm{HT}(M)$ if $M_1$ is PL-homeomorphic to $M_2$. $\mm{hT}(M)$ is the set of concordance classes. 


Let $\mathcal M$ be an $(n-1)$-connected topological manifold of dimension $2n$, which indeed is a PL manifold (cf. \cite{KirbSieb1977}). Furthermore, any two such manifolds are PL-homeomorphic if and only if they are homeomorphic. From \cite{KirSie1969}, closed topological manifolds of dimension other than four are homeomorphic to CW complexes. Therefore, by the simplest cell structure \cite{Wall1965}, $\mathcal M-\{pt\}$ is homotopy equivalent to $\vee S^n$. 
$$[\mathcal M-\{pt\},\mm{\mathcal G/PL}]\cong [\vee S^n,\mm{\mathcal G/PL}]$$ 
By \cite{Sull1967}, $\pi_{2i+1}(\mm{\mathcal G/PL})=0$. Hence, in case of $n$ is odd, the number of the set $\mm{hT}(\mathcal M)$ equals $1$, which implies that a closed manifold of dimension $2n$ is homeomorphic to $\mathcal M$ if and only if it is homotopy equivalent to $\mathcal M$. Thus, from Theorem \ref{homotomanifold}, we have the corollary \ref{onetoone} as follows 

\begin{corollary}\label{onetooneinSec}
	Let $n$ be odd, and $n\ne 2^j-1$ for any $j\ge 1$.
The homotopy types of finite $(n-1)$-connected Poincar$\mm{\acute e}$ complexes of dimension $2n$ are in one-to-one correspondence with the homeomorphism types of $(n-1)$-connected topological manifolds of dimension $2n$. 
\end{corollary}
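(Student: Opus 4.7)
My plan is to combine the realization Theorem \ref{homotomanifold} with a rigidity result coming from Sullivan's surgery theory: the topological structure set of any $(n-1)$-connected closed topological manifold of dimension $2n$ is a singleton in this range, so two such manifolds with the same homotopy type are already homeomorphic.

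First I would verify that every finite $(n-1)$-connected Poincar\'e complex $X$ of dimension $2n$ meets the hypotheses of Theorem \ref{homotomanifold}. Combining $(n-1)$-connectivity with Poincar\'e duality and the universal coefficient theorem, the only possibly nonzero integral homology of $X$ sits in degrees $0$, $n$ and $2n$, with $H_n(X;\mb{Z})$ necessarily a finitely generated free abelian group. Because $n$ is odd, $H_{2k}(X;\mb{Z})$ is either $0$ or $\mb{Z}$ for every $k\ge 1$, hence torsion-free. The only odd degree carrying nontrivial mod-$2$ homology is $n$ itself; the integers $2^{j+1}-1$ for $j\ge 1$ run through $3, 7, 15, 31,\ldots$, and the standing assumption $n\ne 2^j-1$ for all $j\ge 1$ excludes $n\in\{3,7,15,\ldots\}$. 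Thus $H_{2^{j+1}-1}(X;\mb{Z}/2)=0$ for every $j\ge 1$. The dimensional requirement $2n\ge 5$ holds because the exclusions $n\ne 1$ and $n\ne 3$ force $n\ge 5$. Theorem \ref{homotomanifold} then produces a homotopy equivalence $X\simeq \mathcal M$ for some closed topological manifold $\mathcal M$, giving surjectivity of the assignment from homeomorphism types of manifolds to homotopy types of Poincar\'e complexes.

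Next I would establish injectivity. Since $2n\ge 6$ and $\mathcal M$ is simply connected, Kirby--Siebenmann endows $\mathcal M$ with a PL structure, and in this range two simply-connected PL manifolds are PL-homeomorphic iff homeomorphic. Sullivan's classification \cite{Sull1966} identifies the set $\mm{hT}(\mathcal M)$ of PL-concordance classes of manifolds homotopy equivalent to $\mathcal M$ with $[\mathcal M-\{pt\},\mm{\mathcal G/PL}]$. By the simplest cell structure \cite{Wall1965} applied to the Poincar\'e complex $\mathcal M$, the punctured manifold is homotopy equivalent to a wedge $\vee S^n$, so this homotopy set is a product of copies of $\pi_n(\mm{\mathcal G/PL})$. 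Sullivan's computation $\pi_{2i+1}(\mm{\mathcal G/PL})=0$ \cite{Sull1967} then forces $\mm{hT}(\mathcal M)$ to be a single point, and the two steps together yield the desired one-to-one correspondence. The main obstacle is really arithmetic bookkeeping: one must translate the exclusion $n\ne 2^j-1$ precisely into the vanishing of $H_{2^{j+1}-1}(X;\mb{Z}/2)$ required by Theorem \ref{homotomanifold}, and verify that the simplest cell structure is available so that $[\mathcal M-\{pt\},\mm{\mathcal G/PL}]$ reduces to a product of $\pi_n(\mm{\mathcal G/PL})$; everything else is a clean assembly of Theorem \ref{homotomanifold}, Kirby--Siebenmann, and Sullivan's surgery theory.
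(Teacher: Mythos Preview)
Your proposal is correct and follows essentially the same route as the paper: you invoke Theorem \ref{homotomanifold} for the realization (surjectivity) step after checking that the homological hypotheses are met, and you use Kirby--Siebenmann together with Sullivan's identification $\mm{hT}(\mathcal M)\cong[\mathcal M-\{pt\},\mm{\mathcal G/PL}]$ and $\pi_{2i+1}(\mm{\mathcal G/PL})=0$ for the rigidity (injectivity) step. The paper presents these two halves in the opposite order and is somewhat terser about verifying the hypotheses of Theorem \ref{homotomanifold}, but the substance is the same.
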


Let $P$ be such Poincar$\mm{\acute e}$ complex as in Corollary \ref{onetooneinSec}. By the simplest cell structure and the Poincar$\mm{\acute e}$ dual, its cell structure is 
$(\vee_{2k} S^n)\cup_{\beta} e^{2n} $
where $\vee_{2k} S^n$ is the wedge product of $2k$ copies of $S^n$, $\beta\in \pi_{2n-1}(\vee_{2k} S^n)$ is the attaching map. 
Next we focus on the attaching map. 

By the work of Hilton (see \cite{WhiteGW1978}),
	$$\pi_{2n-1}(\vee_{2k} S^n)=\oplus^{2k}  \pi_{2n-1}(S^n)\oplus^{C_{2k}^2} \mb{Z}$$ where $C_{2k}^2$ is the combination number. Let $S_i^n$ be the i-th $S^n\subset \vee_{2k} S^n$. Each $\mb{Z}$ of $\oplus^{C_{2k}^2} \mb{Z}$ is generated by the whitehead product
	$[\iota_i^n,\iota_j^n]$ where $1\le i<j\le 2k$, $\iota_i^n$ is  a generate of $\pi_n(S^n_i)$. 
\begin{proposition}\label{whitepro}
Let $n>1$ be odd. For a general CW-complex $X=(\vee_{2k} S^n)\cup_{f} e^{2n}$,	if the attaching map $f\in \pi_{2n-1}(\vee_{2k} S^n)$ has a component $s[\iota_i^n,\iota_j^n]$, $s\in \mb{Z}$, then for the generators $\ell_i\in H^n(S_i^n;\mb{Z})$, $L\in H^{2n}(X;\mb{Z})$, $\ell_i\cup \ell_j=\pm sL$. 
\end{proposition}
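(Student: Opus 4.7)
The plan is to reduce the computation to the model case of $S^n \times S^n$, via projection to the relevant two wedge summands, Hilton decomposition of the attaching map, and isolation of the Whitehead product component through the functional cup product. First I would apply the projection $p : \vee_{2k} S^n \to S^n_i \vee S^n_j$ that is the identity on the $i$-th and $j$-th summands and collapses the rest; setting $g = p_{\ast} f$ and $Y = (S^n_i \vee S^n_j)\cup_g e^{2n}$, the map $p$ extends to a cellular map $\bar p : X \to Y$ that is the identity on the characteristic disk of the top cell, so $\bar p^{\ast}$ sends $\ell_a^Y \mapsto \ell_a^X$ for $a\in\{i,j\}$ and $L^Y \mapsto L^X$. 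This reduces the problem to showing $\ell_i^Y \cup \ell_j^Y = \pm s\,L^Y$ in $Y$.

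Since $n>1$, every non-linear basic Whitehead product in $\iota_i,\iota_j$ lives in a dimension $\ge 3n-2 > 2n-1$, so Hilton's theorem gives
$$\pi_{2n-1}(S^n_i \vee S^n_j) \cong \pi_{2n-1}(S^n_i)\oplus \pi_{2n-1}(S^n_j)\oplus \mb{Z}\cdot[\iota_i,\iota_j],$$
and I would write $g = \alpha+\beta+s\,[\iota_i,\iota_j]$ with $\alpha\in\pi_{2n-1}(S^n_i)$, $\beta\in\pi_{2n-1}(S^n_j)$; the coefficient $s$ is preserved by $p_{\ast}$ for the same dimensional reason. Now I would invoke the standard mapping-cone formula: for any $g:S^{2n-1}\to A:=S^n_i\vee S^n_j$, the functional cup product $\ell_i\cup_g \ell_j \in H^{2n-1}(S^{2n-1})\cong\mb{Z}$ is defined without indeterminacy (because $H^{n-1}(A)=0$ and $H^{2n}(A)=0$), is additive in $g$ under the co-$H$-sum on $S^{2n-1}$, and in the mapping cone $C_g$ satisfies
$$\ell_i\cup\ell_j = \pm(\ell_i\cup_g \ell_j)\cdot L^{C_g}.$$
Evaluating on each Hilton summand: for $\alpha\in\pi_{2n-1}(S^n_i)$ the cofiber splits as $S^n_j\vee(S^n_i\cup_{\alpha} e^{2n})$, in which cross cup products vanish and so $\ell_i\cup_{\alpha}\ell_j = 0$; symmetrically $\ell_i\cup_{\beta}\ell_j = 0$; and for $[\iota_i,\iota_j]$ the cofiber is exactly $S^n\times S^n$, in which $\ell_i\cup\ell_j$ generates $H^{2n}$, giving $\ell_i\cup_{[\iota_i,\iota_j]}\ell_j = \pm 1$. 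By additivity, $\ell_i\cup_g\ell_j = \pm s$, hence $\ell_i\cup\ell_j = \pm s\,L$ in $Y$ and therefore in $X$.

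The main obstacle is the classical identification of the cup product in a mapping cone with the functional cup product of the attaching map, together with its additivity in $g$; this is standard folklore but is the only genuinely non-elementary input. A concrete alternative, avoiding functional cup products entirely, is to construct explicit comparison maps for the three model attaching maps: the cofiber of $s[\iota_i,\iota_j]$ admits a map to $S^n\times S^n$ extending the identity on $S^n\vee S^n$ by a degree-$s$ map on $S^{2n-1}$ inside the top disk, pulling back $\ell_i\cup\ell_j$ of $S^n\times S^n$ to $\pm s\,L$; the $\alpha,\beta$ contributions are absorbed by the wedge splittings above. Gluing these model computations together via the pinch-map decomposition $g=\alpha+\beta+s[\iota_i,\iota_j]$ yields the same conclusion, at the price of more explicit cell-level bookkeeping.
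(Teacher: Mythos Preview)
Your proof is correct and takes a genuinely different route from the paper's. Both arguments begin identically, projecting to the two relevant wedge summands to reduce to $Y=(S^n_i\vee S^n_j)\cup_g e^{2n}$ with $g=\alpha+\beta+s[\iota_i,\iota_j]$. From there the paper exploits the hypothesis that $n$ is odd: Serre's theorem makes $\pi_{2n-1}(S^n)$ finite, so one can choose $m$ annihilating the torsion components $\alpha,\beta$, observe $ms[\iota_i,\iota_j]=[ms\,\iota_i,\iota_j]$, and build an explicit map $F:S^n\times S^n\to Y$ extending $(\times ms)\vee\mathrm{id}$ on the wedge; a relative-homotopy computation then shows $F^\ast L=m\bar L$ and $F^\ast\ell_i=ms\bar\ell_i$, $F^\ast\ell_j=\bar\ell_j$, from which $\ell_i\cup\ell_j=sL$ follows by comparing with the known product in $S^n\times S^n$. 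Your argument instead identifies $\ell_i\cup\ell_j$ in the mapping cone with the functional cup product $\ell_i\cup_g\ell_j$, uses its additivity in $g$, and evaluates the three Hilton summands separately (the $\alpha,\beta$ pieces vanish because their cofibers split as wedges, and $[\iota_i,\iota_j]$ gives $\pm1$ since its cofiber is $S^n\times S^n$). The paper's approach is entirely hands-on, needing nothing beyond Hurewicz and exact sequences of pairs, but it genuinely requires $n$ odd for the torsion-killing step. Your approach invokes the classical mapping-cone/functional-cup-product identification and its additivity (which you rightly flag as the one nontrivial input), but in exchange it never uses the parity of $n$ and so proves the statement for all $n>1$.
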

\begin{proof}
	Take the projection $\vee_{2k} S^n\to S_i^n\vee S_j^n$, which maps the element $f$ to $f_{ij}$. Indeed, $f_{ij}\in \pi_{2n-1}(S_i^n\vee S_j^n)$ has a component $s[\iota_i^n,\iota_j^n]$. Furthermore, the above projection has an extension map	\begin{equation}
		g:X\to X_{ij}=(S_i^n\vee S_j^n)\cup_{f_{ij}}e^{2n} \label{gmap}
	\end{equation}
	so that $g^\ast:H^{2n}(X_{ij};\mb{Z})\to H^{2n}(X;\mb{Z})$ is an isomorphism. 
	
	Note that $n$ is odd. By the work of Serre \cite{Toda}, $\pi_{2n-1}(S^n)$ is a finite abelian group. Let $f_{ij}=(s[\iota_i^n,\iota_j^n],\mm{T}_1,\mm{T}_2)\in \mb{Z}\oplus \pi_{2n-1}(S^n)\oplus \pi_{2n-1}(S^n)$, the order of $\mm{T}_1$ be $m_1$, the order of $\mm{T}_2$ be $m_2$, $m=m_1\cdot m_2$. 
  Then $$ms[\iota_i^n,\iota_j^n]=mf_{ij}=0\in \pi_{2n-1}(X_{ij}).$$ 
	Recall [\cite{WhiteGW1978}, Theorem 7.7] $S^n\times S^n=(S_i^n\vee S_j^n)\cup_{[\iota_i^n,\iota_j^n]} e^{2n} $. Note $ms[\iota_i^n,\iota_j^n]=[ms\iota_i^n,\iota_j^n]$ \cite{WhiteheadJHC}. For the map
	$S_i^n\vee S_j^n\stackrel{(\times ms)\vee id}{\longrightarrow }S_i^n\vee S_j^n$,
	we have an extension map
	$$F:S^n\times S^n=(S_i^n\vee S_j^n)\cup_{[\iota_i^n,\iota_j^n]} e^{2n}\to X_{ij}=(S_i^n\vee S_j^n)\cup_{f_{ij}}e^{2n}$$ 
	 Moreover,  
	\begin{equation}
		F^\ast(\ell_i)=ms\bar{\ell}_i,\quad F^\ast(\ell_j)=\bar{\ell}_j \label{ndime}
	\end{equation}
	 where $\ell$ is a generator of $H^n(X_{ij};\mb{Z})$, $\bar{\ell}$  of $H^n(S^n\times S^n;\mb{Z})$. Consider the following commutative diagram
	\[
\xymatrix{
  \pi_{2n}(S^n\times S^n,S^n\vee S^n)=\mb{Z} \ar[r]^-{\bar{\partial}} \ar[d]^-{F} &\pi_{2n-1}(S^n\vee S^n) \ar[r]^-{} \ar[d]^-{(\times ms)\vee id}&\pi_{2n-1}(S^n\times S^n)  \ar[d]^-{{F}}\\
\pi_{2n}(X_{ij},S^n\vee S^n)=\mb{Z}\ar[r]^-{\partial} & \pi_{2n-1}(S^n\vee S^n)\ar[r]& \pi_{2n-1}(X_{ij})
}
\] 
$$((\times ms)\vee id)_\ast \circ \bar{\partial}(\bar{e})=(0,0,ms[\iota_i^n,\iota_j^n])\in \pi_{2n-1}(S^n)\oplus \pi_{2n-1}(S^n)\oplus \mb{Z}$$
$$\partial(e)=(\mm{T}_1,\mm{T}_2,s[\iota_i^n,\iota_j^n])\in \pi_{2n-1}(S^n)\oplus \pi_{2n-1}(S^n)\oplus \mb{Z}$$
where $\bar{e}$ is a generator of $\pi_{2n}(S^n\times S^n,S^n\vee S^n)$, ${e}$ of $\pi_{2n}(X_{ij},S^n\vee S^n)$. Hence $F_\ast(\bar e)=me$. By the Hurewicz theorem,
\[
\xymatrix{
 H_{2n}(S^n\times S^n;\mb{Z})\ar[r]^-{\cong}\ar[d]^-{F} & H_{2n}(S^n\times S^n,S^n\vee S^n;\mb{Z})  \ar[d]^-{F} \\
H_{2n}(X_{ij};\mb{Z})\ar[r]^-{\cong}& H_{2n}(X_{ij},S^n\vee S^n;\mb{Z})
}
\]
\begin{equation}
	F^\ast(L)=m\bar L\in H^{2n}(S^n\times S^n;\mb{Z}) \label{2ndime}
\end{equation}
where $L$ is a generator of $H^{2n}(X_{ij};\mb{Z})$, $\bar L$ of $H^{2n}(S^n\times S^n;\mb{Z})$. 

By equations (\ref{ndime}) and (\ref{2ndime}), $\ell_i\cup \ell_j=sL$. Then the desired result follows by the map $g$ (\ref{gmap}).
\end{proof}

Let $n>1$ be odd. For Poincar$\mm{\acute e}$ complex
$P=(\vee_{2k} S^n)\cup_{\beta} e^{2n}$, $H^n(P;\mb{Z})$ has a symplectic basis $\ell_{a_1},\ell_{a_2},\cdots,\ell_{a_k},\ell_{b_1},\cdots, \ell_{b_k}$ such that 
$$\ell_{a_i}\cup \ell_{b_j}=\ell_{a_i}\cup \ell_{a_h}=0,\quad \ell_{a_i}\cup \ell_{b_i}=L$$
where $1\le i,j,h\le k$, $i\ne j$, $L$ is a generator of $H^{2n}(P;\mb{Z})$. By Proposition \ref{whitepro}, $\beta$ has the following fixed component in free part
$$([\iota_{a_1},\iota_{b_1}],[\iota_{a_2},\iota_{b_2}],\cdots,[\iota_{a_k},\iota_{b_k}],0,\cdots,0)\in \oplus^{C_{2k}^2} \mb{Z} $$ 
where 
$\iota_{a_i}$, $\iota_{b_i}$ are the generators of $\pi_n(S^n_{a_i})$, $\pi_n(S^n_{b_i})$. Hence the homotopy type of $P$ is determined by the component of $\beta$ in $\oplus^{2k}  \pi_{2n-1}(S^n)$. 

Indeed, for Poincar$\mm{\acute e}$ complexes
$$P_1=(\vee_{2k} S^n)\cup_{\beta_1} e^{2n}\quad P_2=(\vee_{2k} S^n)\cup_{\beta_2} e^{2n}$$ even though the component of $\beta_1$ is different from the component of $\beta_2$ in $\oplus^{2k}  \pi_{2n-1}(S^n)$, $P_1$ may be also homotopy equivalent to $P_2$. This reason arises from the self homotopy equivalences of $\vee_{2k}S^n$. We will present some examples in the next section.  

\section{Certain highly connected complexes}\label{classifsec}

Let $n>1$ be odd. Recall the EHP sequence \cite{Jame} 
$$\pi_{2n}(S^n)\stackrel{\Sigma}{\to} \pi_{2n+1}(S^{n+1})\stackrel{H}{\to}\pi_{2n+1}(S^{2n+1})\to \pi_{2n-1}(S^{n})\to \pi_{n-1}^s\to 0$$
where $H$ is defined by the Hopf invariant, $\Sigma$ is the suspension. By the work of Serre, $\pi_i(S^m)$ is finite except for the cases that $i = m$,
and that $m$ even and $i = 2m-1$ (also see \cite{Toda}). Hence $\pi_{2n+1}(S^{n+1})$ contains only one $\mb{Z}$ direct summand that detected by the Hopf invariant. It is well known that there exists $\alpha\in \pi_{2n+1}(S^{n+1})$ so that its Hopf invariant equals to $2$ (see \cite{Hatcher}). 

By \cite{Adams1960}, for odd $n\ne 1,3,7$, there is no element of Hopf invariant one in $\pi_{2n+1}(S^{n+1})$. Hence we have the following exact sequence
$$0\to \mb{Z}/2\to \pi_{2n-1}(S^{n})\to \pi_{n-1}^s\to 0$$

By the computation in \cite{Toda},
in case of $\pi_{n-1}^s=0$, $n\ne 1,3,7$.
Next, we always assume that $n$ is odd such that $\pi_{n-1}^s=0$, i.e. $\pi_{2n-1}(S^{n})=\mb{Z}/2$. 

Now, we define the Kervaire invariant \cite{Kerva1960} for the Poincar$\mm{\acute e}$ complex
$$P=(\vee_{2k} S^n)\cup_{\beta} e^{2n}$$

Let $\Omega=\Omega S^{n+1}$ be the loop-space on the $(n+1)$-sphere. It is well known that
$H^n(\Omega;\mb{Z})=\mb{Z}\langle e_n\rangle $, $H^{2n}(\Omega;\mb{Z})=\mb{Z}\langle e_{2n}\rangle$. The $e_n\in H^n(\Omega;\mb{Z})$ can be represented as the map $\Omega\to \mm{K}(\mb{Z},n)$. Let its homotopy fiber be $F$. By the assumption for $n$, $\pi_{2n}(S^{n+1})=\pi_{n-1}^s=0$.  
\begin{equation}
	\pi_{2n-1}(F)=\pi_{2n-1}(\Omega S^{n+1})=\pi_{2n}(S^{n+1})=0 \label{piF}
\end{equation}
 For any $\ell\in H^n(P;\mb{Z})$, there is a map $f:P\to \Omega$ such that 
 $f^\ast(e_n)=\ell$ by obstruction theory \cite{Hatcher}.
  
 Define a function $\varphi_0: H^n(P;\mb{Z}) \to  \mb{Z}/2$ by the following device. For
 $\ell\in H^n(P;\mb{Z})$, take a map $f: P\to \Omega $ such that $f^\ast(e_n)=\ell$. Then, $\varphi_0(\ell)
= f^\ast(u_{2n}) [P]$ where $u_{2n}\in  H^{2n}(\Omega ; \mb{Z}/2)$ is the reduction modulo $2$ of $e_{2n} \in  H^{2n}(\Omega;\mb{Z})$, and $f^\ast(u_{2n}) [P]_2$ is the value of the cohomology class $f^\ast(u_{2n})$ on the reduction modulo $2$ of the fundamental class of $P$. 

Following \cite{Kerva1960}, the function $\varphi_0: H^n(P;\mb{Z}) \to  \mb{Z}/2$ is well defined. Let $l\in H^n(P;\mb{Z}/2)$ be the reduction modulo $2$ of $\ell$.
 We define $\varphi: H^n(P;\mb{Z}/2) \to  \mb{Z}/2$ by $\varphi(l)=\varphi_0(\ell)$ which follows from $\varphi_0(2\ell)=0 .$ Let $l_{a_1},l_{a_2},\cdots,l_{a_k},l_{b_1},\cdots, l_{b_k}$ be a symplectic basis of $H^n(P;\mb{Z}/2)$, 
$$\Phi(P)=\Sigma_1^k\varphi(l_{a_i})\cdot \varphi(l_{b_i}) $$ 
Indeed, the Kervaire invariant $\Phi(P)$ is a homotopy invariant, and 
independent of the symplectic basis $l_{a_1},\cdots,l_{a_k},l_{b_1},\cdots, l_{b_k}$. 

By the simplest cell structure, we have the cell structure
\begin{equation}
	\Omega=S^n\cup_\alpha e^{2n}\cup \cdots \label{cellOmega}
\end{equation}
where $\alpha$ is the attaching map. Since $\pi_{2n-1}(S^n)=\mb{Z}/2$ and the equation (\ref{piF}), 
 $\alpha$ is the generator of $\pi_{2n-1}(S^n)$.

Let $P$ be the Poincar$\mm{\acute e}$ complex of dimension $2n$. We take a symplectic basis $\ell_{1},\ell_2,\cdots,\ell_{2k-1}, \ell_{2k}\in H^{n}(P;\mb{Z})$ so that $\ell_{2i-1}\cup \ell_{2i}$ is a generator of $H^{2n}(P;\mb{Z})$ for $1\le i\le k$.
	$$ P=(S_1^n\vee S_2^n\vee \cdots \vee S_{2k-1}^n\vee S_{2k}^n)\cup_{\beta} e^{2n}$$
	Let $\ell_{j}$ be a generator of $H^n(S_j^n;\mb{Z})$. Note that 
	$$\pi_{2n-1}(S_1^n\vee S_2^n\vee \cdots \vee S_{2k-1}^n\vee S_{2k}^n)=\oplus_{i=1}^{2k} \pi_{2n-1}(S_i^n)\oplus^{C_{2k}^2}\mb{Z}$$
	
By Proposition \ref{whitepro}, the component of ${\beta}$ in free part is 
$$[\iota_{1},\iota_{2}]+[\iota_{3},\iota_{4}]+\cdots+[\iota_{{2k-1}},\iota_{{2k}}]$$
where $\iota_j$ is a generator of $\pi_n(S^n_j)=\mb{Z}$.

Let $\beta_j$ be the component of $\beta$ in $\pi_{2n-1}(S_j^n)=\mb{Z}/2$, the array of $\beta$ be 
$$(\beta_1,\beta_2,\cdots ,\beta_{2k-1},\beta_{2k})\in \oplus_{j=1}^{2k} \pi_{2n-1}(S_j^n)=(\mb{Z}/2)^{2k}$$
Here we can also regard $\beta_j$ as the number $0$ or $1$.
\begin{lemma}\label{kevarinv}
For $\varphi_0:H^n(P;\mb{Z})\to \mb{Z}/2$, $\varphi_0(\ell_j)=\beta_j$.	
\end{lemma}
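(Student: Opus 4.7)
The plan is to construct, for each $j$, an explicit map $f_j : P \to \Omega$ with $f_j^\ast(e_n) = \ell_j$ and then read off $\varphi_0(\ell_j) = f_j^\ast(u_{2n})[P]_2$ directly from the cellular structure; since $\varphi_0$ is independent of the choice of representing map, any such $f_j$ will do. The model map will factor through the $2n$-skeleton $\Omega^{(2n)} = S^n \cup_\alpha e^{2n}$ from (\ref{cellOmega}), and the whole computation reduces to tracking attaching maps.

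First I would define the collapse map $g_j : \bigvee_{i=1}^{2k} S_i^n \to S^n \subset \Omega$ that sends $S_j^n$ by the identity and collapses every other summand to the basepoint, and observe that any extension of $g_j$ over the top cell of $P$ automatically satisfies $f_j^\ast(e_n) = \ell_j$: indeed, the inclusion $\bigvee S_i^n \hookrightarrow P$ induces an isomorphism on $H^n$ for $n > 1$, since $P/\bigvee S_i^n = S^{2n}$ contributes nothing in degree $n$ or $n-1$. Next I would compute $g_j \circ \beta \in \pi_{2n-1}(S^n)$. The free part of $\beta$ is the sum $\sum_{i=1}^{k}[\iota_{2i-1}, \iota_{2i}]$; since $j$ lies in exactly one paired index set $\{2i_0 - 1, 2i_0\}$, and since $g_j$ kills the partner generator in that pair and kills both generators of every other pair, bilinearity of the Whitehead product together with naturality ($g_{j\ast}[\alpha,\beta] = [g_{j\ast}\alpha, g_{j\ast}\beta]$) gives $g_{j\ast}\beta^{\mathrm{free}} = 0$. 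The torsion part contributes $\beta_j \cdot \alpha$, where $\alpha$ is the generator of $\pi_{2n-1}(S^n) = \mb{Z}/2$ that is also the attaching map of the $2n$-cell of $\Omega$ in (\ref{cellOmega}).

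Finally I would split into two cases. If $\beta_j = 0$, then $g_j \circ \beta$ is null-homotopic in $S^n$, so $g_j$ extends to $f_j : P \to S^n \subset \Omega$ factoring through the $n$-skeleton of $\Omega$; hence $f_j^\ast(u_{2n}) = 0$ and $\varphi_0(\ell_j) = 0 = \beta_j$. If $\beta_j = 1$, then $g_j \circ \beta = \alpha$ in $\pi_{2n-1}(S^n)$, so after a preliminary homotopy within $S^n$ I can extend $g_j$ by sending the $2n$-cell of $P$ onto the $2n$-cell of $\Omega$ via its characteristic map, obtaining $f_j : P \to \Omega^{(2n)} \subset \Omega$ whose action on top cellular cochains is $\pm 1$. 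Using that $e_{2n} \in H^{2n}(\Omega)$ restricts to a generator of $H^{2n}(\Omega^{(2n)}) \cong \mb{Z}$ — a consequence of the James model, in which cells appear only in degrees $kn$ so that the cellular coboundary landing in degree $2n$ is zero for lack of $(2n-1)$-cells — we get $f_j^\ast(e_{2n}) = \pm L$, and reducing mod $2$ yields $\varphi_0(\ell_j) = 1 = \beta_j$.

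The main obstacle is the degree bookkeeping on the top cell when $\beta_j = 1$: one must verify that the characteristic-map extension produces a cochain-level map of degree $\pm 1$ and that $e_{2n}$ indeed detects the top cell of $\Omega^{(2n)}$. The sign ambiguity is harmless since the final evaluation is modulo $2$, so the heart of the argument reduces to the Whitehead-product vanishing in the second paragraph, which is where the specific pairing structure of the free part of $\beta$ is used.
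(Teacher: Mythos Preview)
Your proof is correct and follows essentially the same route as the paper: both collapse all wedge summands except $S_j^n$ (the paper names the resulting quotient $P_j = S_j^n \cup_{\beta_j} e^{2n}$ explicitly and factors the map $P \to \Omega$ through it), identify the $2n$-skeleton $\Omega^{(2n)} = S^n \cup_\alpha e^{2n}$, and split on $\beta_j \in \{0,1\}$. The one noteworthy difference is the case $\beta_j = 0$: you build a \emph{specific} lift factoring through $S^n \subset \Omega$, which kills $H^{2n}$ for free; the paper instead takes an \emph{arbitrary} lift $\hat\ell_j'\colon P_j = S^n \vee S^{2n} \to \Omega$ and shows it is zero on $H_{2n}(\,\cdot\,;\mb{Z}/2)$ by observing that the Hurewicz map $\pi_{2n}(\Omega) \to H_{2n}(\Omega;\mb{Z})$ coincides with the Hopf invariant, hence has image in $2\mb{Z}$ by Adams. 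Your version is more direct; the paper's version makes the role of the Hopf-invariant-one hypothesis more visible.
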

\begin{proof}
Considering the projection 
$$\mathfrak p:P\to P_j=S^n_j\cup_{\beta_j}e^{2n} $$
we have that $\mathfrak p^\ast:H^i(P_j;\mb{Z})\to H^i(P;\mb{Z})$ is a monomorphism for $i=n$, an isomorphism for $i=2n$. Let $\hat{\ell}_j$ denote a generator of $H^n(P_j;\mb{Z})$ such that $\mathfrak p^\ast(\hat{\ell}_j)={\ell}_j$. By equation (\ref{piF}), the map $\hat{\ell}_j:P_j\to \mm{K}(\mb{Z},n)$ has a lift $\hat{\ell}_j^\prime:P_j\to \Omega$. Similarly, we also have a well defined function $\hat{\varphi}_0:H^n(P_j;\mb{Z})\to \mb{Z}_2$. Moreover,
$\varphi_0(\ell_j)=\hat{\varphi}_0(\hat{\ell}_j )$ since that the definition of $\varphi_0$ is independent of the choice of the lift $P\to \Omega$ of the map $\ell_j:P\to \mm{K}(\mb{Z},n)$ (cf. Lemma 1.2 in \cite{Kerva1960}).  
  
If $\beta_j=1$, then $P_j$ is homotopy equivalent to the $2n$-skeleton of $\Omega$. So the lift $\hat{\ell}_j^\prime:P_j\to \Omega$ is the inclusion. By the definition of $\hat{\varphi}_0$, we have $\hat{\varphi}_0(\hat{\ell_j})=1$.

If $\beta_j=0$, then $P_j=S^n_j\vee S^{2n}$. Applying the Hurewicz homomorphism, we have the commutative diagram
\[
\xymatrix@C=0.8cm{
\pi_{2n}(S^n_j\vee S^{2n})\ar[d]^-{}\ar[r]^-{\hat{\ell}_j^\prime}&\pi_{2n}(\Omega)=\mb{Z}\oplus \mm{im}\Sigma \ar[d]^-{}\\
H_{2n}(S^n_j\vee S^{2n};\mb{Z})\ar[r]^-{\hat{\ell}_j^\prime}&  H_{2n}(\Omega;\mb{Z})=\mb{Z}
}
\]
where $\Sigma:\pi_{2n}(S^n)\to \pi_{2n+1}(S^{n+1})\cong \pi_{2n}(\Omega)$ is the suspension.  
It is well known that the right-hand Hurewicz homomorphism coincides with the Hopf invariant. By the assumption for $n$, $\mb{Z}\subset \pi_{2n}(\Omega)\to H_{2n}(\Omega;\mb{Z})$ is the $\times 2$ homomorphism (cf. \cite{Adams1960}). Therefore, by the above commutative diagram, we have that $(\hat{\ell}_j^\prime)_\ast:H_{2n}(P_j;\mb{Z}/2)\to H_{2n}(\Omega;\mb{Z}/2)$ is trivial, i.e. $(\hat{\ell}_j^\prime)^\ast:H^{2n}(\Omega;\mb{Z}/2)\to H^{2n}(P_j;\mb{Z}/2)$ is trivial. Then $\hat{\varphi}_0(\hat{\ell}_j)=0$ follows 
by the definition of $\hat{\varphi}_0$. Thus the desired result follows.
\end{proof}
\begin{theorem}\label{kervaintheor}
	The homotopy types of $(n-1)$-connected Poincar$\mm{\acute e}$ complexes of dimension $2n$ are determined by the $n$-th Betti number and the Kervaire invariant where $n$ is odd such that $\pi_{n-1}^s=0$.  
\end{theorem}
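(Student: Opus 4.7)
To prove Theorem \ref{kervaintheor}, the plan is to translate the classification into the theory of quadratic forms over $\mathbb{F}_2$ and then realize the resulting algebraic isomorphism geometrically via a self-homotopy equivalence of $\vee_{2k}S^n$. Starting from the cell decomposition $P = (\vee_{2k}S^n) \cup_\beta e^{2n}$ derived at the end of Section \ref{SecSurgery} together with the attaching-map analysis in Proposition \ref{whitepro}, a choice of symplectic basis of $H^n(P;\mathbb{Z})$ pins down the free part of $\beta$ as $\sum_{i=1}^k[\iota_{2i-1}, \iota_{2i}]$, so only the torsion array $(\beta_1, \ldots, \beta_{2k}) \in (\mathbb{Z}/2)^{2k}$ remains as data. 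Lemma \ref{kevarinv} then identifies this array with the values of Kervaire's function $\varphi$ on the dual basis of $H^n(P;\mathbb{Z}/2)$.

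Next, I would verify (following Kervaire's argument in \cite{Kerva1960}) that $\varphi$ is a quadratic refinement of the cup-product pairing, so that $(H^n(P;\mathbb{Z}/2), \cup, \varphi)$ is a nondegenerate quadratic space over $\mathbb{F}_2$ whose Arf invariant equals $\Phi(P)$. Given two $(n-1)$-connected Poincar\'e complexes $P_1, P_2$ of dimension $2n$ with the same Betti number $2k$ and same $\Phi$, Arf's theorem supplies an $\mathbb{F}_2$-isomorphism $\psi$ between their $H^n$'s preserving both the cup product and $\varphi$. Since the reduction $\mathrm{Sp}_{2k}(\mathbb{Z}) \to \mathrm{Sp}_{2k}(\mathbb{F}_2)$ is surjective, $\psi$ lifts to an integral symplectic automorphism $\tilde\psi$, which I would realize by a self-homotopy equivalence $h: \vee_{2k}S^n \to \vee_{2k}S^n$ acting by the matrix of $\tilde\psi$ on $\pi_n = \mathbb{Z}^{2k}$.

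The main obstacle is to check that $h_*\beta^1 = \beta^2$ in $\pi_{2n-1}(\vee_{2k}S^n)$, which would let $h$ extend to the desired homotopy equivalence $P_1 \to P_2$. Using Hilton's decomposition $\pi_{2n-1}(\vee_{2k}S^n) = \bigoplus^{2k}\pi_{2n-1}(S^n) \oplus \bigoplus^{C_{2k}^2}\mathbb{Z}$, the free-part agreement follows from $\tilde\psi$ preserving the integral symplectic form. The torsion components are subtler: applying $h$ to the free Whitehead sum $\sum[\iota_{2i-1}, \iota_{2i}]$ generates diagonal contributions $[\iota_j,\iota_j]$ with coefficients given by the quadratic expressions $\sum_m \psi_{j,2m-1}\psi_{j,2m}$, and these must combine with the linear transport of $(\beta^1_j)$ to produce exactly $(\beta^2_j)$. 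The identity needed is $\varphi_2(\tilde\psi x) = \varphi_1(x)$ on the dual basis vectors $x$, which holds by construction of $\psi$; I would verify the matching on attaching maps by decomposing $\tilde\psi$ into elementary symplectic transvections and tracking the effect of each under Hilton's formula, mirroring the algebraic Witt-style reduction at the geometric level.

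For part (2), explicit models give existence. The complex $\#_k(S^n\times S^n)$ realizes the pair $(2k, \Phi=0)$. For $\Phi=1$, take $K = (S^n\vee S^n)\cup_\beta e^{2n}$ with $\beta = [\iota_1,\iota_2] + \alpha_1 + \alpha_2$, where $\alpha_i$ generates $\pi_{2n-1}(S^n_i)=\mathbb{Z}/2$; then $\varphi(\ell_1)=\varphi(\ell_2)=1$ by Lemma \ref{kevarinv}, so $\Phi(K)=1$, and the connected sum of $K$ with $k-1$ copies of $S^n\times S^n$ realizes $(2k, \Phi=1)$ for any $k\ge 1$. Proposition \ref{whitepro} ensures in each case that the cup-product pairing on $H^n$ is the standard symplectic form, so Poincar\'e duality holds.
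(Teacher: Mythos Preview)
Your proposal is correct and takes a genuinely different route from the paper's proof. The paper proceeds by explicit normalization: starting from the array $(\beta_1,\ldots,\beta_{2k})$ it builds concrete self-homotopy equivalences of $\vee_{2k}S^n$ out of the pinch map $\Delta:S^n\to S^n\vee S^n$ and degree-one identifications, and uses these to kill pairs $(\beta_{2j-1},\beta_{2j})=(0,1)$ or $(1,0)$ and to cancel two $(1,1)$ pairs against each other, exploiting that the generator of $\pi_{2n-1}(S^n)=\mathbb{Z}/2$ is $[\iota,\iota]$. This reduces every $P$ to a canonical model depending only on the parity of $\#\{j:\beta_{2j-1}=\beta_{2j}=1\}$, which by Lemma~\ref{kevarinv} is $\Phi(P)$. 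Your approach instead packages the data as a quadratic space over $\mathbb{F}_2$, invokes Arf's classification and the surjectivity $\mathrm{Sp}_{2k}(\mathbb{Z})\twoheadrightarrow\mathrm{Sp}_{2k}(\mathbb{F}_2)$, and then realizes the resulting integral symplectic matrix geometrically. The paper's argument is entirely self-contained and avoids citing Arf's theorem or the reduction surjectivity; your argument is more conceptual and makes transparent \emph{why} the Kervaire invariant is the complete invariant. Note that your proposed verification via a transvection decomposition of $\tilde\psi$ essentially reproduces the paper's elementary moves at the technical level, so the two arguments converge there; alternatively, a direct bilinear expansion of $h_\ast[\iota_i,\iota_j]$ in the Hilton basis (using $[\iota_k,\iota_k]$ as the torsion generator) gives the required identity $\beta^2_k=\sum_i a_{k,2i-1}a_{k,2i}+\sum_j a_{kj}\beta^1_j$ in one stroke, matching exactly the condition $\varphi_1\circ h^\ast=\varphi_2$. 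Your existence constructions for part~(2) agree with the paper's.
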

\begin{proof}
Let $P$ be the Poincar$\mm{\acute e}$ complex with $H^n(P)=\mb{Z}^{2k}$. Take $\ell_i$ $1\le i\le 2k$ as a symplectic basis of $H^n(P;\mb{Z})$. Then, by the simplest cell structure, we have  	
$$ P=(S_1^n\vee S_2^n\vee \cdots \vee S_{2k-1}^n\vee S_{2k}^n)\cup_{\beta} e^{2n}$$
where $\ell_i$ is a generator of $H^n(S^n_i)$.
Moreover, the component of $\beta$ in the free part is 
$[\iota_1,\iota_2]+[\iota_3,\iota_4]+\cdots +[\iota_{2k-1},\iota_{2k}]$
where $\iota_i\in \pi_{2n-1}(S^n_i)$. Denote the array of $\beta$ by
$(\beta_1,\beta_2,\cdots,\beta_{2k-1},\beta_{2k})$. By Lemma \ref{kevarinv},
\begin{equation}
	\Phi(P)=\Sigma_{j=1}^k \varphi_0(\ell_{2j-1})\varphi_0(\ell_{2j})=\Sigma_{j=1}^k \beta_{2j-1}\beta_{2j} \label{PhiP}
\end{equation} 
where $\beta_{2j-1}\beta_{2j}$ is the product in $\mb{Z}/2$.

 For a pair $(\beta_{2j-1},\beta_{2j})$ $1\le j\le k$, if it equals to $(0,1)$, we take  
$$\bar{\mathfrak h}: S^n_{2j-1}\vee S^n_{2j} \stackrel{\Delta \vee 1}{\longrightarrow} S^n_{2j-1}\vee S^n_{2j-1}\vee S^n_{2j}\stackrel{1\vee r\vee 1}{\longrightarrow }S^n_{2j-1}\vee S^n_{2j}$$
where $\Delta:S^n\to S^n\vee S^n$ is the coproduct of $S^n$, $r:S^n_{2j-1}\to S^n_{2j}$ is the map with degree $1$. It is easy to check that the homomorphism $\bar{\mathfrak h}_\ast:H_n(S^n_{2j-1}\vee S^n_{2j})\to H_n(S^n_{2j-1}\vee S^n_{2j})$ is isomorphic. Hence $\bar{\mathfrak h}$ is a homotopy equivalence. Moreover, we have 
$$\bar{\mathfrak h}_\ast[\iota_{2j-1},\iota_{2j}]=[\iota_{2j-1}+\iota_{2j},\iota_{2j}]=[\iota_{2j-1},\iota_{2j}]+[\iota_{2j},\iota_{2j}]$$
By \cite{Adams1960}, $0\ne [\iota_{2j},\iota_{2j}]$ is the generator of $\pi_{2n-1}(S^n_{2j})=\mb{Z}/2$ (here note that n is odd so that $\pi_{n-1}^s=0$). Then taking the homotopy equivalence
$$\mathfrak h=1\vee 1\vee \cdots \vee \bar{\mathfrak h}\vee \cdots \vee 1\vee 1:\vee_{2k}S^n\to \vee_{2k}S^n$$
we have that the component of $\hat{\beta}=\mathfrak h_\ast(\beta)$ in free part is also 
$$[\iota_1,\iota_2]+[\iota_3,\iota_4]+\cdots +[\iota_{2k-1},\iota_{2k}],$$
and the array of $\hat{\beta}$ is 
$({\beta}_1,{\beta}_2,\cdots,{\beta}_{2j-1},\hat{\beta}_{2j},\cdots,{\beta}_{2k-1},{\beta}_{2k})$. Since that ${\beta}_{2j-1}=0$, $\hat{\beta}_{2j}=\beta_{2j}+[\iota_{2j},\iota_{2j}]=0$. For $(\beta_{2j-1},\beta_{2j})=(1,0)$, we can also construct a similar map. 

Let $\#(\beta)$ be the number of the elements of the set 
$$\{(\beta_{2j-1},\beta_{2j})|\beta_{2j-1}=\beta_{2j}=1,1\le j\le k\}.$$

Case 1: $\Phi(P)=0$. By equation (\ref{PhiP}), $\#(\beta)$ is even. Assume
$$\beta_1=\beta_2=\beta_3=\beta_4=1$$
We define a map $\bar{\mathfrak h}$ by the composite of the following maps
$$ S^n_{1}\vee S^n_{2}\vee S^n_{3}\vee S^n_{4} \stackrel{\Delta \vee 1\Delta \vee 1}{\longrightarrow} S^n_{1}\vee S^n_{1}\vee S^n_{2}\vee S^n_{3}\vee S^n_{3}\vee S^n_{4}$$
$$S^n_{1}\vee S^n_{1}\vee S^n_{2}\vee S^n_{3}\vee S^n_{3}\vee S^n_{4}\stackrel{1\vee r^1\vee 1\vee 1\vee r^2\vee 1}{\longrightarrow }S^n_{1}\vee S^n_{2}\vee S^n_{3}\vee S^n_{4}$$
where $r^1:S^n_1\to S^n_4$ and $r^2:S^n_3\to S^n_2$ are the maps with degree $1$. Note that $\bar{\mathfrak h}$ is a homotopy equivalence.
 Since $[\iota_4,\iota_2]=-[\iota_2,\iota_4]$, 
$$\bar{\mathfrak h}_\ast([\iota_{1},\iota_{2}]+[\iota_{3},\iota_{4}])=[\iota_{1}+\iota_{4},\iota_{2}]+[\iota_3+\iota_2,\iota_4]=[\iota_{1},\iota_{2}]+[\iota_{3},\iota_{4}]$$
Then taking the homotopy equivalence
$$\mathfrak h=\bar{\mathfrak h}\vee 1\vee 1 \vee \cdots  \vee 1\vee 1:\vee_{2k}S^n\to \vee_{2k}S^n$$
we have that the component of $\hat{\beta}=\mathfrak h_\ast(\beta)$ in free part is also 
$$[\iota_1,\iota_2]+[\iota_3,\iota_4]+\cdots +[\iota_{2k-1},\iota_{2k}],$$
but the array of $\hat{\beta}$ is 
$({\beta}_1,\hat{\beta}_2, {\beta}_3,\hat{\beta}_4,{\beta}_{5},{\beta}_{6},\cdots,{\beta}_{2k-1},{\beta}_{2k})$
where $\hat{\beta}_{2}={\beta}_{2}+{\beta}_{3}=0$ and $\hat{\beta}_{4}={\beta}_{1}+{\beta}_{4}=0$. Applying the map constructed for pair $(1,0)$, we get a new map $\bar\beta$ of which the component equals to the component of ${\beta}$ in free part, the array is 
$(0,0,0,0,\beta_5,\beta_6, \cdots,{\beta}_{2k-1},{\beta}_{2k}).$

Repeat the above process. There exists a self homotopy equivalence $\mathfrak h:\vee_{i=1}^{2k} S^n_i\to \vee_{i=1}^{2k} S^n_i$ such that $\mathfrak h_\ast(\beta)=\bar{\beta}$ of which the component equals to the component of ${\beta}$ in free part, the array is $(0,0,\cdots,0,0)$. Moreover, the map $\mathfrak h$ can be extended to the Poincar$\mm{\acute e}$ complexes
$$P\to \bar P=\vee_{i=1}^{2k} S^n_i\cup_{\bar \beta}e^{2n}$$
 which is also a homotopy equivalence.
 
 Case 2: $\Phi(P)=1$. Note that $\#(\beta)$ is odd. We use the same method to construct the homotopy equivalence
 $$P\to \tilde P=\vee_{i=1}^{2k} S^n_i\cup_{\tilde \beta}e^{2n}$$
where the component of $\tilde \beta$ equals to the component of ${\beta}$ in free part, the array of $\tilde \beta$ is $(\tilde \beta_1,\tilde \beta_2,\cdots,\tilde \beta_{2k-1},\tilde \beta_{2k})$ so that there exists only one pair $(\tilde \beta_{2j-1},\tilde \beta_{2j})$ $1\le j\le k$ equaling to $(1,1)$. In particular, we let $j=1$ by a permutation.
\end{proof}

From the proof of Theorem \ref{kervaintheor}, the Poincar$\mm{\acute e}$ complex of dimension $2n$ of Kervaire invariant one always exists for any odd number $n$ satisfying $\pi_{n-1}^s=0$ and any value of the $n$-th Betti number.

By Corollary \ref{onetooneinSec} and Theorem \ref{kervaintheor}, the homeomorphism types of $(n-1)$-connected topological manifolds of dimension $2n$ are determined by the $n$-th Betti number and the Kervaire invariant if $n\ne 2^j-1$ is odd so that $\pi_{n-1}^s=0$.

\section{Detection of smooth structure}\label{smoothsec}

By Corollary \ref{onetooneinSec} and Theorem \ref{kervaintheor}, an $(n-1)$-connected topological manifolds of dimension $2n$ of Kervaire invariant zero is homeomorphic to the connected sum $\#^{k} S^n\times S^n $ where $2k$ is the $n$-th Betti number, $n\ne 2^j-1$ is odd so that $\pi_{n-1}^s=0$. 
In this section, we investigate the smooth structure of the $(n-1)$-connected topological manifold of dimension $2n$ of Kervaire invariant one.

\begin{definition}
	A connected smooth manifold $M$ is almost parallelizable if it is trivial that the restriction bundle of the stable normal bundle of $M$ on $M - x_0$
   for a point $x_0\in M$. 
\end{definition}
 
For an $(n-1)$-connected smooth manifold $M$ of dimension $2n$, $M-x_0$ is homotopy equivalent to the $n$ skeleton $\vee_{i} S_i^n$ of $M$. If $M$ is almost parallelizable,
 there is only one possibly non-vanishing obstruction $\mathfrak o_n\in H^{2n}(M;\pi_{2n-1}(\mm{SO}))$ to the construction of the trivialization for the stable normal bundle of $M$. $\mathfrak o_n$ is in the kernel of the Hopf-Whitehead homomorphism $J:\pi_{2n-1}(\mm{SO})\to \pi_{2n-1}^s$ by Lemma 1 of \cite{MilnorK1858}. Let $n$ be odd. Thus $2n=2$ or $6$ mod $8$. Note that $\pi_{2n-1}(\mm{SO})=\pi_{2n}(\mm{BSO})=0$ for $2n=6$ mod $8$. On the other hand, by \cite{Adams1966}, $\mm{ker}J_{2n-1}=0$ for $2n=2$ mod $8$. Hence, we have
\begin{lemma}\label{framed}
	Let $n$ be odd. 
	If an $(n-1)$-connected smooth manifold $M$ of dimension $2n$ is almost parallelizable, it is a framed manifold.
	\end{lemma}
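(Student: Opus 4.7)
The plan is to reduce the claim to a single obstruction class and then kill that class using the structure of the stable homotopy of $\mm{SO}$ together with the behavior of the $J$-homomorphism. The overall strategy is exactly the one sketched in the paragraph preceding the lemma; my proposal is to execute it as follows.

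First, I would use the $(n-1)$-connectedness and the simplest cell structure (as in \cite{Wall1965}) to put $M$ in the form $(\vee_{i} S^{n}_{i}) \cup_{\beta} e^{2n}$, so that $M - x_{0}$ deformation retracts onto the wedge $\vee_{i} S^{n}_{i}$. Almost parallelizability gives a stable trivialization $\tau$ of the normal bundle over this wedge, and the problem becomes extending $\tau$ across the single top cell. Standard cellular obstruction theory then places the full obstruction in $H^{2n}(M;\pi_{2n-1}(\mm{SO}))\cong \pi_{2n-1}(\mm{SO})$, a single element $\mathfrak o_{n}$.

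Second, I would invoke Lemma 1 of \cite{MilnorK1858} to conclude that $\mathfrak o_{n}\in \ker J$, where $J\colon \pi_{2n-1}(\mm{SO})\to \pi_{2n-1}^{s}$ is the Hopf--Whitehead homomorphism. The geometric content of this step is that the obstruction to framing an almost-framed closed manifold, read off from the attaching map of the top cell, must become nullhomotopic stably after applying $J$, because the underlying $S^{2n-1}$-bundle is the trivial one over the sphere it is gluing along. This is the step I expect to be the main obstacle to internalize, since it is where the manifold hypothesis (as opposed to a bare CW-extension problem) actually enters.

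Third, with $n$ odd I split into the two possible residues of $2n$ modulo $8$. When $2n\equiv 6\pmod 8$, Bott periodicity gives $\pi_{2n-1}(\mm{SO})=0$ outright, so $\mathfrak o_{n}=0$ for trivial reasons. When $2n\equiv 2\pmod 8$, Bott periodicity gives $\pi_{2n-1}(\mm{SO})\cong \mb Z/2$, but Adams' computation \cite{Adams1966} shows that $J$ is injective on this $\mb Z/2$, so $\ker J_{2n-1}=0$ and again $\mathfrak o_{n}=0$. Either way the trivialization extends across the top cell, producing a stable framing of $M$, which is precisely the assertion that $M$ is a framed manifold. The only non-routine input is the Milnor--Kervaire identification of $\mathfrak o_{n}$ with an element of $\ker J$; everything else is a table lookup in Bott periodicity and Adams' theorem.
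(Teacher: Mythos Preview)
Your proposal is correct and follows essentially the same argument as the paper, which is given in the paragraph immediately preceding the lemma: reduce to a single obstruction $\mathfrak o_n\in H^{2n}(M;\pi_{2n-1}(\mm{SO}))$, invoke Lemma~1 of \cite{MilnorK1858} to place it in $\ker J$, and then kill it via the case split $2n\equiv 6\pmod 8$ (Bott) and $2n\equiv 2\pmod 8$ (Adams). There is nothing to add.
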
 

Finally, we give the main result of this section.  
\begin{corollary}
	Let $n$ is odd such that $\pi_{n-1}^s=0$. There exists an $(n-1)$-connected topological manifold of dimension $2n$ whose Kervaire invariant is one. Furthermore, it does not admit any smooth structure when additionally $n\ne 1\mod 8$ and $n\ne 2^j-1$.   
\end{corollary}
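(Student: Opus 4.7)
The plan is to combine the paper's classification with Bott periodicity and the classical Browder theorem on the Kervaire invariant. For the existence statement, I would apply Theorem \ref{kervaintheor}(2) to produce an $(n-1)$-connected Poincar$\mm{\acute e}$ complex $P$ of dimension $2n$ with $\Phi(P)=1$ (taking, say, $n$-th Betti number two). Under the standing hypothesis $\pi_{n-1}^s=0$, the remaining assumption $n\ne 2^j-1$ needed for Corollary \ref{onetooneinSec} is satisfied in the relevant range, so Corollary \ref{onetooneinSec} realizes $P$ as a closed $(n-1)$-connected topological manifold $M$ of dimension $2n$. Since $\Phi$ is a homotopy invariant of such complexes (Theorem \ref{kervaintheor}), one has $\Phi(M)=\Phi(P)=1$, proving existence.

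For the non-smoothability claim, I would suppose for contradiction that $M$ admits a smooth structure. Because $M$ is $(n-1)$-connected of dimension $2n$, the simplest cell structure gives $M-\{pt\}\simeq \bigvee_i S_i^n$, so the restriction of the stable normal bundle $\nu:M\to \mm{BSO}$ to $M-\{pt\}$ is determined summand-by-summand by classes in $\pi_n(\mm{BSO})=\pi_{n-1}(\mm{SO})$. For $n$ odd with $n\ne 1\mod 8$ one has $n-1\in\{2,4,6\}\mod 8$, and Bott periodicity gives $\pi_{n-1}(\mm{SO})=0$. Consequently $\nu|_{M-\{pt\}}$ is trivial, so $M$ is almost parallelizable, and Lemma \ref{framed} then upgrades this to a genuine stable framing of $M$.

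To close the argument I would invoke Browder's theorem: the Kervaire invariant of a closed framed manifold of dimension $2n$ with $n$ odd can be nonzero only when $n+1$ is a power of $2$, i.e. $n=2^j-1$ for some $j$. Since our hypothesis excludes this case, a framed $M$ must satisfy $\Phi(M)=0$, contradicting $\Phi(M)=1$. Hence $M$ admits no smooth structure.

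The step requiring the most care is the identification of the Poincar$\mm{\acute e}$-complex Kervaire invariant $\Phi$ defined in the paper (via a lift $M\to \Omega S^{n+1}$ and the mod $2$ reduction of $e_{2n}$) with the classical Arf invariant of the quadratic refinement of the mod $2$ intersection form on $H^n(M;\mb{Z}/2)$ for a framed manifold. Both invariants arise from evaluating against $e_{2n}$ on the cell structure (\ref{cellOmega}) of $\Omega S^{n+1}$, and the quadratic refinement $\varphi_0$ defined earlier is precisely Kervaire's original refinement once $M$ is framed. This compatibility — which is what allows Browder's obstruction to be imported into the present framework — should be recorded explicitly before the application of Browder's theorem.
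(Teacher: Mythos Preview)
Your proposal is correct and follows essentially the same route as the paper: existence via Theorem \ref{kervaintheor} plus Corollary \ref{onetooneinSec}, and non-smoothability via Bott periodicity $\Rightarrow$ almost parallelizable $\Rightarrow$ framed (Lemma \ref{framed}) $\Rightarrow$ Browder's dimension restriction. Your explicit identification of $\pi_{n-1}(\mm{SO})=0$ for $n\equiv 3,5,7\pmod 8$ and your final caveat about matching the paper's $\varphi_0$-based invariant with the classical Arf--Kervaire invariant for framed manifolds are refinements the paper leaves implicit, but they do not constitute a different method.
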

\begin{proof}
The existence of such manifold follows from Corollary \ref{onetooneinSec}, and the existence of $(n-1)$-connected $2n$-dimensional Poincar$\mm{\acute e}$ complex of Kervaire invariant one (cf. the proof of Theorem \ref{kervaintheor}).

By the Bott periodicity, an $(n-1)$-connected smooth manifold $M$ of dimension $2n$ is almost parallelizable when $n\ne 1\mod 8$ is odd. Hence, by Lemma \ref{framed}, such smooth manifold $M$ is framed. On the other hand, the Kervaire invariant of a framed manifold is zero for dimensions $\ne 2^{j+1}-2$ (cf. \cite{BrowderW1969}). Therefore, under the assumption for $n$, suppose that an $(n-1)$-connected $2n$-dimensional topological manifold of Kervaire invariant one admits a smooth structure, then a contradiction follows. 
\end{proof}

\begin{Ack}
I would like to thank Yang Su for many helpful discussions.	
\end{Ack}


\end{document}